\theoremstyle{plain}
\newtheorem{theorem*}{Theorem}[]{}
\newtheorem{theorem}{Theorem}[section]
\newtheorem{proposition}[theorem]{Proposition}
\theoremstyle{definition}
\newtheorem{definition}[theorem]{Definition}
\theoremstyle{remark}
\newtheorem{remark}[theorem]{Remark}
\newtheorem{example}[theorem]{Example}
\newcommand{\nc}{\newcommand}
\nc{\fg}{\mathfrak{f} } \nc{\vg}{\mathfrak{v} } \nc{\wg}{\mathfrak{w} }
\nc{\zg}{\mathfrak{z} } \nc{\ngo}{\mathfrak{n} } \nc{\kg}{\mathfrak{k} }
\nc{\mg}{\mathfrak{m} } \nc{\bg}{\mathfrak{b} } \nc{\ggo}{\mathfrak{g} }
\nc{\ggob}{\overline{\mathfrak{g}} } \nc{\sog}{\mathfrak{so} }
\nc{\sug}{\mathfrak{su} } \nc{\spg}{\mathfrak{sp} } \nc{\slg}{\mathfrak{sl} }
\nc{\glg}{\mathfrak{gl} } \nc{\cg}{\mathfrak{c} } \nc{\rg}{\mathfrak{r} }
\nc{\hg}{\mathfrak{h} } \nc{\tg}{\mathfrak{t} } \nc{\ug}{\mathfrak{u} }
\nc{\dg}{\mathfrak{d} } \nc{\ag}{\mathfrak{a} } \nc{\pg}{\mathfrak{p} }
\nc{\sg}{\mathfrak{s} } \nc{\pca}{\mathcal{P}} \nc{\nca}{\mathcal{N}}
\nc{\lca}{\mathcal{L}} \nc{\oca}{\mathcal{O}} \nc{\mca}{\mathcal{M}}
\nc{\tca}{\mathcal{T}} \nc{\aca}{\mathcal{A}} \nc{\cca}{\mathcal{C}}
\nc{\gca}{\mathcal{G}} \nc{\sca}{\mathcal{S}} \nc{\hca}{\mathcal{H}}
\nc{\bca}{\mathcal{B}} \nc{\dca}{\mathcal{D}} \nc{\val}{\operatorname{val}}
\nc{\vp}{\varphi} \nc{\ddt}{\tfrac{{\rm d}}{{\rm d}t}}
\nc{\dpar}{\tfrac{\partial}{\partial t}} \nc{\im}{\mathtt{i}}
\renewcommand{\Re}{{\rm Re}}
\nc{\SO}{\mathrm{SO}} \nc{\Spe}{\mathrm{Sp}} \nc{\Sl}{\mathrm{SL}}
\nc{\SU}{\mathrm{SU}} \nc{\Or}{\mathrm{O}} \nc{\U}{\mathrm{U}} \nc{\Gl}{\mathrm{GL}}
\nc{\Se}{\mathrm{S}} \nc{\Cl}{\mathrm{Cl}} \nc{\Spein}{\mathrm{Spin}}
\nc{\Pin}{\mathrm{Pin}} \nc{\G}{\mathrm{GL}_n(\RR)} \nc{\g}{\mathfrak{gl}_n(\RR)}
\nc{\RR}{{\Bbb R}} \nc{\HH}{{\Bbb H}} \nc{\CC}{{\Bbb C}} \nc{\ZZ}{{\Bbb Z}}
\nc{\FF}{{\Bbb F}} \nc{\NN}{{\Bbb N}} \nc{\QQ}{{\Bbb Q}} \nc{\PP}{{\Bbb P}}
\nc{\vs}{\vspace{.2cm}} \nc{\vsp}{\vspace{1cm}} \nc{\ip}{\langle\cdot,\cdot\rangle}
\nc{\ipp}{(\cdot,\cdot)} \nc{\la}{\langle} \nc{\ra}{\rangle} \nc{\unm}{\tfrac{1}{2}}
\nc{\unc}{\tfrac{1}{4}} \nc{\und}{\tfrac{1}{16}} \nc{\no}{\vs\noindent}
\nc{\lam}{\Lambda^2(\RR^n)^*\otimes\RR^n} \nc{\tangz}{{\rm T}^{\rm Zar}}
\nc{\nor}{{\sf n}}  \nc{\mum}{/\!\!/} \nc{\kir}{/\!\!/\!\!/}
\nc{\Ri}{\tfrac{4\Ric_{\mu}}{||\mu||^2}} \nc{\ds}{\displaystyle}
\nc{\ben}{\begin{enumerate}} \nc{\een}{\end{enumerate}} \nc{\f}{\frac}
\nc{\lb}{[\cdot,\cdot]} \nc{\isn}{\tfrac{1}{||v||^2}}
\nc{\He}{\operatorname{Hess}} \nc{\ad}{\operatorname{ad}}
\nc{\Ad}{\operatorname{Ad}} \nc{\rank}{\operatorname{rank}}
\nc{\Irr}{\operatorname{Irr}} \nc{\End}{\operatorname{End}}
\nc{\Aut}{\operatorname{Aut}} \nc{\Inn}{\operatorname{Inn}}
\nc{\Der}{\operatorname{Der}} \nc{\Ker}{\operatorname{Ker}}
\nc{\Iso}{\operatorname{I}} \nc{\Diff}{\operatorname{D}} \nc{\Lie}{\operatorname{L}}
\nc{\tr}{\operatorname{tr}} \nc{\dif}{\operatorname{d}}
\nc{\sen}{\operatorname{sen}} \nc{\modu}{\operatorname{mod}}
\nc{\Ric}{\operatorname{R}} \nc{\Ricci}{\operatorname{Ric}}
\nc{\sym}{\operatorname{sym}} \nc{\symac}{\operatorname{sym^{ac}}}
\nc{\symc}{\operatorname{sym^{c}}} \nc{\scalar}{\operatorname{sc}}
\nc{\grad}{\operatorname{grad}} \nc{\ricci}{\operatorname{ric}}
\nc{\ricciac}{\operatorname{ric^{ac}}} \nc{\riccic}{\operatorname{ric^{c}}}
\nc{\riccig}{\operatorname{ric^{\gamma}}} \nc{\Rin}{\operatorname{M}}
\nc{\Le}{\operatorname{L}} \nc{\tang}{\operatorname{T}}
\nc{\level}{\operatorname{level}} \nc{\rad}{\operatorname{r}}
\nc{\abel}{\operatorname{ab}} \nc{\CH}{\operatorname{CH}}
\nc{\mcc}{\operatorname{mcc}} \nc{\Adj}{\operatorname{Adj}}
\newcommand{\bi}{\overline{i}}
\newcommand{\bk}{\overline{k}}
\newcommand{\bl}{\overline{l}}
\newcommand{\bm}{\overline{m}}
\newcommand{\br}{\overline{r}}
\begin{document}

\title{Quasi-K\"ahler Chern-flat manifolds and complex $2$-step nilpotent Lie algebras}

\author{Antonio J. Di Scala} \author{Jorge Lauret} \author{Luigi Vezzoni}

\address{Dipartimento di Matematica, Politecnico di Torino, Torino, Italy.}
\email{antonio.discala@polito.it}

\address{FaMAF and CIEM, Universidad Nacional de C\'ordoba, C\'ordoba, Argentina.}
\email{lauret@famaf.unc.edu.ar}

\address{Dipartimento di Matematica, Universit\`a di Torino, Torino, Italy.}
\email{luigi.vezzoni@unito.it}

\thanks{The first and the third authors were supported by the Project M.I.U.R. ``Riemannian Metrics and  Differentiable Manifolds''
and by G.N.S.A.G.A. of I.N.d.A.M.\\
The second author was supported by a grant from SeCyT (Universidad Nacional de C\'ordoba)}

\date{}

\maketitle

\begin{abstract}
The study of quasi-K\"ahler Chern-flat almost Hermitian manifolds is strictly related to the study of anti-bi-invariant almost complex Lie algebras.
In the present paper we show that quasi-K\"ahler Chern-flat almost Hermitian structures on compact manifolds are in correspondence to
complex parallelisable Hermitian structures satisfying the second Gray identity.
From an algebraic point of view this correspondence reads as a natural correspondence between
anti-bi-invariant almost complex structures on Lie algebras to bi-invariant complex structures.
Some natural algebraic problems are approached and some exotic examples are carefully described.
\end{abstract}

\section{Introduction}
A complex manifold is called \emph{complex parallelisable} if
admits a global holomorphic $(1,0)$-frame. In view of a result of Wang
\cite{wang}, a compact complex manifold is complex parallelisable if and only if it is
a quotient of a complex Lie group by a discrete subgroup. Moreover, a complex manifolds is complex parallelisable if and
only if admits a \emph{Chern-flat} metric, i.e. a metric whose Chern connection has trivial holonomy.
The aim of this paper is to study the interplay between compact quotients of compact Lie groups and
quasi-K\"ahler Chern-flat manifolds from the point of view of both Lie theory and
Hermitian geometry.
Quasi-K\"ahler Chern-flat manifolds were introduced in \cite{DV}.
A compact almost complex manifold is called quasi-K\"ahler Chern-flat if admits a compatible quasi-K\"ahler metric (also called $(1,2)$-symplectic) such that its Chern connection has trivial holonomy group. In view of a result proved in \cite{DV}, compact quasi-K\"ahler Chern-flat manifolds are all obtained as quotients of $2$-step nilpotent Lie groups by a lattice. From an algebraic point of view, the study of complex Lie groups corresponds to the study of bi-invariant complex structures on Lie algebras, while  the study of quasi-K\"ahler Chern-flat manifolds corresponds to the study of $2$-step nilpotent almost complex Lie algebras $(\mathfrak{g},\lb,J)$ satisfying the \emph{anti-bi-invariant} relation $J\lb=-[J\cdot,\cdot]$.
A $J$ satisfying the previous relation will be called in the paper an \emph{anti-bi-invariant }structure.
These algebraic correspondences allows us to relate the geometry of compact quotient of complex Lie groups to the geometry of quasi-K\"ahler Chern-flat manifolds.

More precisely, let $G$ be a complex Lie group with complex structure $J$, Lie algebra $(\mathfrak{g},\lb)$ and a left-invariant Hermitian metric $g$.  Let $\Gamma \subset G$ be a lattice and let $M=G/\Gamma$.  Then $J$ induces the anti-bi-invariant almost complex structure
$$
J^{-}(X)=\begin{cases}
+J(X)\quad \mbox{if }X\in \mathfrak{z} \\
-J(X)\quad \mbox{if }X\in \mathfrak{z}^{\perp}\,,
\end{cases}
$$
where $\mathfrak{z}$ is the center of $\mathfrak{g}$. We will prove the following

\begin{theorem}\label{th1}
The following facts are equivalent:
\begin{enumerate}
\item[1.] the curvature of the metric $g$ satisfies the second Gray identity $;$
\vspace{0.1cm}
\item[2.] the curvature of any left-invariant Hermitian metric on $M=G/\Gamma$ satisfies the second Gray identity $({\rm G_2})$ $;$
\vspace{0.1cm}
\item[3.] $g$ is quasi-K\"ahler with respect to $J^-;$
\vspace{0.1cm}
\item[4.] the Lie group $G$ is $2$-step nilpotent$\,.$
\end{enumerate}
Moreover, if one of the above conditions holds, then $g$ is Chern-flat with respect to $J$ and $J^-$.
\end{theorem}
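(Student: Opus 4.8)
The plan is to push everything down to the metric Lie algebra $(\mathfrak g,[\cdot,\cdot],J,\langle\cdot,\cdot\rangle)$, where $J$ is \emph{bi-invariant} ($J[X,Y]=[JX,Y]=[X,JY]$, as $G$ is a complex Lie group) and $\langle\cdot,\cdot\rangle$ is the inner product of $g$. Since $\mathfrak z$ is $J$-invariant (if $[Z,\cdot]=0$ then $[JZ,\cdot]=J[Z,\cdot]=0$), so is $\mathfrak z^{\perp}$, and hence $J^-$ is a well-defined almost complex structure with $\langle J^-\cdot,J^-\cdot\rangle=\langle\cdot,\cdot\rangle$. I would first dispose of Chern-flatness with respect to $J$: in the global left-invariant holomorphic frame coming from a basis of the $(1,0)$-space, the matrix $h_{i\bar j}=g(Z_i,\overline{Z_j})$ is constant, so the Chern connection form $h^{-1}\partial h$ vanishes and the Chern connection is flat, \emph{unconditionally}. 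Thus only Chern-flatness with respect to $J^-$ has content. The algebraic hinge is the equivalence
$$[\mathfrak g,\mathfrak g]\subseteq\mathfrak z\iff J^- \text{ is anti-bi-invariant},$$
which follows from the bi-invariance of $J$: writing $X=X_{\mathfrak z}+X_\perp$ and using that $JX_{\mathfrak z}$ is central one gets $-[J^-X,Y]=[JX_\perp,Y]=J[X,Y]$, so $J^-[X,Y]=-[J^-X,Y]$ for all $X,Y$ exactly when $J^-=J$ on $[\mathfrak g,\mathfrak g]$, i.e. when $[\mathfrak g,\mathfrak g]\subseteq\mathfrak z$. I record the deviation $S(X,Y)$, the $\mathfrak z^\perp$-component of $[X,Y]$; thus $S\equiv0$ iff $(4)$ holds iff $J^-$ is anti-bi-invariant.

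For $(4)\Rightarrow(3)$ and the flatness of the $J^-$-Chern connection I would use the Koszul decomposition $\nabla_XY=\tfrac12[X,Y]+U(X,Y)$, with $U$ the symmetric part determined by $2\langle U(X,Y),Z\rangle=\langle[Z,X],Y\rangle+\langle[Z,Y],X\rangle$, and evaluate the quasi-K\"ahler defect $(\nabla_XJ^-)Y+(\nabla_{J^-X}J^-)(J^-Y)$. Writing $K=J^-$ and using $K[A,B]=-[KA,B]=-[A,KB]$ together with $\langle KA,KB\rangle=\langle A,B\rangle$, a short pairing computation shows that both the four bracket contributions \emph{and} the four $U$-contributions cancel in pairs, so the defect vanishes identically; the same mechanism computes the $J^-$-Chern connection and gives flatness, completing ``moreover''. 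As this argument uses only $[\mathfrak g,\mathfrak g]\subseteq\mathfrak z$ and the Hermitian relation, it applies verbatim to \emph{every} left-invariant Hermitian metric on $M=G/\Gamma$; combined with Gray's theorem that a quasi-K\"ahler metric satisfies $(\mathrm{G}_2)$ this yields $(2)$, whence $(1)$ by specializing to $g$, and $(2)\Rightarrow(1)$ is immediate.

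The substance is the two converse implications $(3)\Rightarrow(4)$ and $(1)\Rightarrow(4)$, which I expect to be the main obstacle. Here anti-bi-invariance is not assumed, so the bracket contributions to the quasi-K\"ahler defect (and, for $(1)$, the failure of $(\mathrm{G}_2)$ computed from the explicit left-invariant curvature formula) no longer cancel; their residue is controlled by $S$. The task is to show that each defect tensor is \emph{faithful} to $S$, i.e. vanishes only when $S\equiv0$, which forces $(4)$. The delicate point --- amplified by the fact that $K=J^-$ acts as $+J$ on $\mathfrak z$ but as $-J$ on $\mathfrak z^\perp$, so that the bi-invariance of $J$ interacts nontrivially with this splitting --- is to separate the genuinely transverse ($S$-)part of the defect from the symmetric $U$-terms and the residual bi-invariant pieces, ruling out an accidental cancellation with $S\neq0$. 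Once this faithfulness is established, $(1),(3)\Rightarrow(4)$, and the cycle $(1)\Rightarrow(4)\Rightarrow(2)\Rightarrow(1)$ together with $(4)\Leftrightarrow(3)$ closes all four equivalences.
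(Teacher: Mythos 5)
Your proposal correctly isolates the algebraic hinge ($J^-$ is anti-bi-invariant if and only if $[\mathfrak{g},\mathfrak{g}]\subseteq\mathfrak{z}$), and your treatment of Chern-flatness with respect to $J$ and of $(4)\Rightarrow(3)$ matches the paper's. But there are two genuine gaps. First, your only bridge from quasi-K\"ahler to the second Gray identity is ``Gray's theorem that a quasi-K\"ahler metric satisfies $({\rm G}_2)$''. No such theorem exists: the identities $({\rm G}_1)$--$({\rm G}_3)$ are \emph{not} automatic for the quasi-K\"ahler (or even the almost K\"ahler) class --- they cut out proper subclasses, which is precisely why ``almost K\"ahler manifolds satisfying a Gray curvature condition'' is a nontrivial theme in the integrability/Goldberg-conjecture literature; what Gray proved is that K\"ahler metrics satisfy $({\rm G}_1)$, together with identities containing $\nabla J$-correction terms for the other classes. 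Second, and independently of that, even if quasi-K\"ahler did imply $({\rm G}_2)$, your argument would produce $({\rm G}_2)$ with respect to $J^-$, whereas conditions (1)--(2) (cf.\ the definition of $\mathcal{CP}^{2}_{g}$ in Theorem \ref{th2}) assert $({\rm G}_2)$ with respect to the integrable structure $J$; these are different identities, and passing between them is essentially the content of the theorem, so the appeal is also circular in spirit. On top of this, the converse implications $(1)\Rightarrow(4)$ and $(3)\Rightarrow(4)$ --- which you yourself call the substance --- are left as a program (``faithfulness of the defect tensor to $S$''), with the danger of accidental cancellation flagged but not ruled out. As written, the proposal establishes $(4)\Rightarrow(3)$, the Chern-flatness claims and the hinge, but not the equivalence of the four conditions.

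The paper shows that neither a defect-tensor analysis nor any general Gray-type theorem is needed. For $(1)\Leftrightarrow(2)\Leftrightarrow(4)$: for \emph{any} left-invariant Hermitian metric on the complex Lie group, Koszul's formula together with $[\mathfrak{g}^{1,0}_J,\mathfrak{g}^{0,1}_J]=0$ (bi-invariance of $J$) gives $D_{Z_i}Z_j=\frac{1}{2}[Z_i,Z_j]$ on $J$-$(1,0)$ fields, hence the metric-independent formula $R(W_1,W_2)W_3=\frac{1}{4}[W_3,[W_1,W_2]]$; since $g$ pairs $\mathfrak{g}^{1,0}_J$ trivially with itself and nondegenerately with $\mathfrak{g}^{0,1}_J$, the identity $({\rm G}_2)$ with respect to $J$ holds if and only if all double brackets $[\mathfrak{g}^{1,0}_J,[\mathfrak{g}^{1,0}_J,\mathfrak{g}^{1,0}_J]]$ vanish, i.e.\ if and only if $\mathfrak{g}$ is $2$-step nilpotent --- both directions at once, for every left-invariant metric. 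For $(3)\Rightarrow(4)$: your own identity $[J^-X,Y]=-J[X,Y]$ (valid with no nilpotency assumption) already gives $[\mathfrak{g}^{1,0}_{J^-},\mathfrak{g}^{0,1}_{J^-}]=0$ identically; the quasi-K\"ahler hypothesis then yields, by a one-line Koszul pairing ($g([Z_i,Z_r],\overline{Z}_k)=0$ for a $J^-$-$(1,0)$ frame), the inclusion $[\mathfrak{g}^{1,0}_{J^-},\mathfrak{g}^{1,0}_{J^-}]\subset\mathfrak{g}^{0,1}_{J^-}$, and these relations together with the Jacobi identity annihilate all triple brackets. So the lever you needed was already in your hands; the delicate separation of $S$-parts from $U$-parts that you postponed never has to be carried out.
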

In other words, the correspondence $J\mapsto J^-$ preservers the \emph{Chern flat} condition, while requiring that $J^-$ is also quasi-K\"ahler is equivalent to require that $R^g$ satisfies the second Gray identity  with respect to $J$. Gray identities are classical identities involving the curvature of an almost Hermitian structure and they will be recall in the
next section.

Theorem \ref{th1} will be used to study the interplay between some spaces of almost complex structures on compact manifolds.
Indeed, given a compact Riemannian manifold $(M,g)$, we consider the following spaces:
\begin{itemize}
\item $\mathcal{Z}_g=\Big\{\mbox{almost complex structures on $M$ compatible to $g$}\Big\}\,;$

\vspace{0.1cm}
\item $\mathcal{Q}\mathcal{K}^{0}_{g}=\Big\{J\in\mathcal{Z}_g\,:\mbox{ $(g,J)$ is a quasi-K\"ahler Chern-flat almost Hermitian structure} \Big\}$\,;

\vspace{0.1cm}
\item $\mathcal{CP}^{2}_{g}=\Big\{ J\in\mathcal{Z}_g\,:\mbox{ $(g,J)$ is a Chern-flat Hermitian structure and $R^g$ satisfies $({\rm G}_2)$} \Big\}$\,.
\end{itemize}
Then we will prove the following
\begin{theorem}\label{th2}
There exists a natural one-to-one correspondence between $\mathcal{Q}\mathcal{K}^{0}_{g}$ and $\mathcal{CP}^{2}_{g}$.
Moreover, there exists an {\rm SKT} structure $J$ in $\mathcal{CP}^{2}_{g}$ if and only if
$(M,g,J)$ is a flat K\"ahler torus.
\end{theorem}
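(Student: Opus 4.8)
The plan is to realise the required bijection through the assignment $J\mapsto J^-$ itself. First I would record that $J\mapsto J^-$ is an involution of $\mathcal{Z}_g$: the centre $\zg$ of $\ggo$ and its $g$-orthogonal complement $\zg^\perp$ are $J$-invariant and mutually $g$-orthogonal, so $J^-$ is again $g$-compatible, and applying the defining formula twice restores $J$ on each of $\zg,\zg^\perp$, giving $(J^-)^-=J$. A short bracket computation using $[\ggo,\ggo]\subseteq\zg$ then shows the involution exchanges bi-invariant and anti-bi-invariant structures on the fixed real algebra $\ggo$: if $J$ is anti-bi-invariant, then for $X=X_0+X_1$ with $X_0\in\zg$, $X_1\in\zg^\perp$ and using $J\zg=\zg$ one gets $[J^-X,Y]=-[JX_1,Y]=-[JX,Y]=J[X,Y]=J^-[X,Y]$, since $[X,Y]\in\zg$; hence $J^-$ is bi-invariant, and symmetrically in the other direction. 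With this in hand, given $J\in\mathcal{CP}^2_g$, Wang's theorem realises $M=G/\Gamma$ with $G$ a complex Lie group and $J$ its bi-invariant structure; as condition~$1$ of Theorem~\ref{th1} holds, condition~$3$ and the final assertion give that $J^-$ is quasi-K\"ahler and Chern-flat, i.e.\ $J^-\in\mathcal{Q}\mathcal{K}^0_g$. Conversely, for $J\in\mathcal{Q}\mathcal{K}^0_g$ the structure $J^-$ is bi-invariant and makes $G$ a complex Lie group; applying Theorem~\ref{th1} with input $J^-$ (whose associated anti-bi-invariant structure $(J^-)^-=J$ is quasi-K\"ahler, so condition~$3$ is met) places $J^-$ in $\mathcal{CP}^2_g$. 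Thus $J\mapsto J^-$ is the desired one-to-one correspondence.

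For the second assertion, the easy implication comes first. If $(M,g,J)$ is a flat K\"ahler torus, then the Chern connection equals the flat Levi-Civita connection, so $J$ is Chern-flat, while $R^g=0$ makes every Gray identity hold; hence $J\in\mathcal{CP}^2_g$, and $d\omega=0$ gives $\partial\bar\partial\omega=0$, so $J$ is SKT.

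The hard part will be the reverse implication. Assume $J\in\mathcal{CP}^2_g$ is SKT. By Theorem~\ref{th1} one has $M=G/\Gamma$ with $G$ a $2$-step nilpotent complex Lie group, $J$ bi-invariant and $g$ a left-invariant Hermitian metric. I would fix a left-invariant $g$-unitary $(1,0)$-coframe $\{\varphi^1,\dots,\varphi^n\}$; since $[\ggo^{1,0},\ggo^{0,1}]=0$ for a complex Lie group, each $d\varphi^i$ is of pure type $(2,0)$,
\[
d\varphi^i=\tfrac12\sum_{j,k}c^i_{jk}\,\varphi^j\wedge\varphi^k ,\qquad c^i_{jk}=-c^i_{kj},
\]
so that $\bar\partial\varphi^i=0$ and $\partial\bar\varphi^i=0$. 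Writing $\omega=\im\sum_j\varphi^j\wedge\bar\varphi^j$, the vanishing $\partial\bar\varphi^i=0$ kills the otherwise-present terms and a direct computation leaves
\[
\partial\bar\partial\omega=-\tfrac{\im}{4}\sum_{i}\sum_{j,k,p,q}c^i_{jk}\,\overline{c^i_{pq}}\ \varphi^j\wedge\varphi^k\wedge\bar\varphi^p\wedge\bar\varphi^q .
\]
The $(2,2)$-forms $\varphi^j\wedge\varphi^k\wedge\bar\varphi^p\wedge\bar\varphi^q$ with $j<k$, $p<q$ are linearly independent, so the SKT condition $\partial\bar\partial\omega=0$ forces $\sum_i c^i_{jk}\,\overline{c^i_{pq}}=0$ for all such indices. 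Choosing $(p,q)=(j,k)$ gives $\sum_i|c^i_{jk}|^2=0$, whence $c^i_{jk}=0$ for all $i,j,k$. Therefore $\ggo$ is abelian, $G=\CC^n$ and $M=\CC^n/\Gamma$ is a complex torus; the left-invariant metric is then flat and, as now $d\omega=0$, K\"ahler. The only delicate step is the type bookkeeping producing the displayed formula for $\partial\bar\partial\omega$ (where $\partial\bar\varphi^i=0$ is essential); the extraction of commutativity from the diagonal terms is then immediate.
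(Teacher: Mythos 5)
Your proposal is correct and follows essentially the same route as the paper: the bijection is realized by the sign-flip $J\mapsto J^-$ on $\zg^\perp$ combined with Theorem \ref{th1} and Theorem \ref{DVth1}, and the SKT statement is proved by the identical left-invariant unitary coframe computation (the paper's Proposition on Chern-flat SKT manifolds), where $\partial\overline{\partial}\omega=0$ forces the structure constants to vanish via the diagonal terms $\sum_i|c^i_{jk}|^2=0$. Your additions (explicit check that $J\mapsto J^-$ is an involution exchanging bi-invariant and anti-bi-invariant structures, and the trivial torus-implies-SKT direction) only make explicit what the paper leaves implicit.
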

According to the results of \cite{DV} described above, compact quasi-K\"ahler
Chern-flat manifolds are all $2$-step nilmanifolds whose Lie algebras are endowed
with an anti-bi-invariant structure.  Two anti-bi-invariant structures are considered {\it isomorphic} if they are
conjugate by an automorphism of $\ngo$. The following natural questions arise:
\begin{itemize}
\item[(Q1)] Which real $2$-step algebras admit an anti-bi-invariant structure?  Is a `reasonable' classification of anti-bi-invariant structures up to isomorphism feasible?

\item[(Q2)] Is any anti-bi-invariant $2$-step algebra $(\ngo,J)$ the anticomplexification of a real $2$-step algebra?

\item[(Q3)] Given an anti-bi-invariant $2$-step algebra $(\ngo,J)$, are there any canonical or distinguished almost Hermitian metrics on $(\ngo,J)$?

\item[(Q4)] Can a given anti-bi-invariant $2$-step algebra $(\ngo,J)$ be the
anticomplexification of infinitely many pairwise non-isomorphic real $2$-step
algebras?

\item[(Q5)] Can a given $2$-step algebra $\ngo$ admit two non-isomorphic anti-bi-invariant
structures?
\end{itemize}

We approach these `algebraic' questions in Section \ref{alg}.  Let $\ngo$ be a real
$2$-step algebra of dimension $2n$.  It is easy to see that an almost complex
structure $J$ on $\ngo$ is anti-bi-invariant if and only the almost
complex structure $J^-$ obtained by changing the sign of $J$ on the
$J$-invariant complement subspace of $[\ngo,\ngo]$, is {\it bi-invariant} (i.e.
$[JX,Y]=J[X,Y]$ for all $X,Y\in\ngo$).  This answers (Q1) as follows: $\ngo$ admits
an anti-bi-invariant structure if and only if $\ngo$ is a complex $n$-dimensional
$2$-step algebra viewed as real.  Moreover, the classification of anti-bi-invariant
$2$-step algebras up to isomorphism is equivalent to the classification of complex
$2$-step algebras up to isomorphism, a wild problem (see Section \ref{varphi}).

If $(\hg_{\CC},J)$ is the anticomplexification of a real $n$-dimensional $2$-step
algebra $\hg$, then $J^-$ is precisely the multiplication by $\im=\sqrt{-1}$ in $\hg_{\CC}$.
This implies that an anti-bi-invariant $(\ngo,J)$ is the anticomplexification of
$\hg$ if and only if $(\ngo,J^-)$ is the complexification of $\hg$, and so any
complex $2$-step algebra without any real form provides a negative answer to (Q2)
(see Section \ref{comp}).

Concerning (Q3), let us first recall that the precise meaning of canonical or
distinguished is part of the question.  An almost Hermitian metric on an almost
complex Lie algebra is said to be {\it minimal} if it minimizes the norm of the
$(1,1)$-component of the Ricci tensor among all almost Hermitian metrics with the
same scalar curvature.  We prove that an anti-bi-invariant almost Hermitian
$(\ngo,J,\ip)$ is minimal if and only if $(\ngo,\ip)$ is a {\it Ricci soliton}, i.e.
the solution of the Ricci flow $\dpar g(t)=-2\,{\rm Ric}_{g(t)}$ starting at $\ip$
evolves only by scaling and the action by diffeomorphisms (see Section \ref{can}).

Question (Q4) has a negative answer, provided by the well-known fact that a complex Lie algebra admits at most finitely many real forms
up to isomorphism (see Section \ref{comp}).

Question (Q5) remains open. It is equivalent to the existence of two
non-isomorphic complex $2$-step algebras which are isomorphic as real Lie algebras.

\vs \noindent {\it Acknowledgements.}  We are grateful to Michael Jablonski for providing us with the reference \cite{BrlHrs}, which was used in Remark \ref{MJ}. We are also grateful to the referee for useful suggestions and remarks.

\section{Preliminaries}\label{pre}
An \emph{almost Hermitian manifold} is an even dimensional Riemannian manifold $(M,g)$
equipped with an almost complex structure $J$ which is an isometry with respect to
$g$.  Any almost Hermitian manifold admits a canonical Hermitian connection. More precisely:
\begin{proposition}
Let $(M,g,J)$ be an almost Hermitian manifold. There exists a unique Hermitian
connection $\nabla$ on $M$ $($i.e. $\nabla J=\nabla g=0)$ whose torsion has
everywhere vanishing $(1,1)$-part.
\end{proposition}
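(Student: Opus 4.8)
The plan is to construct the connection by correcting the Levi-Civita connection $\nabla^{g}$, thereby reducing everything to a pointwise problem in linear algebra. Any metric connection has the form $\nabla=\nabla^{g}+A$ for a unique tensor field $A\in\Gamma(T^{*}M\otimes\End(TM))$, and the requirement $\nabla g=0$ amounts to asking that each $A_{X}$ be skew-symmetric, i.e.\ $A_{X}\in\sog(T_{p}M)$ at every point. Since $\nabla^{g}$ is torsion-free, the torsion of $\nabla$ is simply $T(X,Y)=A_{X}Y-A_{Y}X$, so all three defining conditions become fibrewise-linear conditions on $A$; it therefore suffices to solve one linear-algebra problem at each point $p\in M$.

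First I would dispose of the Hermitian condition $\nabla J=0$. Since $(\nabla_{X}J)Y=(\nabla^{g}_{X}J)Y+[A_{X},J]Y$, this reads $[A_{X},J]=-(\nabla^{g}_{X}J)$ for every $X$. Decompose $\sog(T_{p}M)=\ug\oplus\ug^{\perp}$ into the skew endomorphisms commuting, resp.\ anticommuting, with $J$, and write $A_{X}=A_{X}^{\ug}+A_{X}^{\perp}$ accordingly. Differentiating $J^{2}=-\mathrm{id}$ shows that $\nabla^{g}_{X}J$ is skew-symmetric and anticommutes with $J$, hence lies in $\ug^{\perp}$; together with $[A^{\ug}_{X},J]=0$ and $[A^{\perp}_{X},J]=-2\,J\,A^{\perp}_{X}$ this shows that the Hermitian condition determines the anticommuting part completely, namely $A^{\perp}_{X}=-\tfrac12\,J\,(\nabla^{g}_{X}J)$, while leaving $A^{\ug}\in T^{*}M\otimes\ug$ entirely free. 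Thus the Hermitian connections form an affine space modelled on $T^{*}M\otimes\ug$, and it remains to see that the torsion condition selects exactly one point of it.

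Now I impose that the $(1,1)$-part of the torsion vanish, i.e.\ $T(X,Y)+T(JX,JY)=0$. Since $T$ is linear in $A$ and the part $A^{\perp}$ is already fixed, its contribution is a prescribed $(1,1)$-valued $2$-form, so the condition becomes a single linear equation $\Phi(A^{\ug})=\eta$, where $\Phi$ sends $A^{\ug}\in T^{*}M\otimes\ug$ to the $(1,1)$-part of $(X,Y)\mapsto A^{\ug}_{X}Y-A^{\ug}_{Y}X$ and $\eta$ is the prescribed $(1,1)$-valued target coming from $A^{\perp}$. The crux is the \emph{key lemma}: $\Phi$ is injective. Granting this, observe that its domain and codomain have the same dimension: $\dim(T^{*}M\otimes\ug)=2n\cdot n^{2}$, while the space of $(1,1)$-valued $2$-forms also has dimension $2n\cdot n^{2}$ since the scalar $(1,1)$-forms occupy an $n^{2}$-dimensional space. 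Hence $\Phi$ is a linear isomorphism and the equation has a unique solution $A^{\ug}$, yielding \emph{simultaneously} the existence and the uniqueness of $\nabla$.

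The main obstacle is the injectivity of $\Phi$. An element of $\Ker\Phi$ is an $A^{\ug}$ with each $A^{\ug}_{X}\in\ug$ whose torsion $\tau(X,Y)=A^{\ug}_{X}Y-A^{\ug}_{Y}X$ has vanishing $(1,1)$-part. Setting $c(X,Y,Z)=g(A^{\ug}_{X}Y,Z)$, commutation with $J$ gives $c(X,JY,JZ)=c(X,Y,Z)$, skew-symmetry gives $c(X,Y,Z)=-c(X,Z,Y)$, and the torsion condition reads $c(X,Y,Z)-c(Y,X,Z)+c(JX,JY,Z)-c(JY,JX,Z)=0$. I would then prove $c\equiv0$ by the cyclic-permutation argument familiar from the uniqueness proof for the Levi-Civita connection: permute the three arguments, use the two symmetries to bring each term onto a common footing, and add the resulting identities so that all but one copy of $c$ cancels. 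This is a finite, purely algebraic computation and is the only genuinely delicate point; everything else is bookkeeping with the type decomposition.
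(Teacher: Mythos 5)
The paper never proves this proposition---it is quoted as a known result of Ehresmann--Libermann \cite{EL}---so your decision to argue from scratch is fine, and the skeleton of your argument is the standard, correct one: every metric connection is $\nabla^{g}+A$ with each $A_{X}$ skew; the condition $\nabla J=0$ pins down the $J$-anticommuting part as $A^{\perp}_{X}=-\tfrac12 J(\nabla^{g}_{X}J)$ and leaves $A^{\ug}\in T^{*}M\otimes\ug$ free; the torsion condition is then a fibrewise linear equation $\Phi(A^{\ug})=\eta$; and your dimension count ($2n\cdot n^{2}$ on both sides) is right, so injectivity of $\Phi$ would indeed give existence and uniqueness at once.

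The genuine gap is the key lemma itself, which you only sketch, and the sketch as described fails. Write $c(X,Y,Z)=g(A^{\ug}_{X}Y,Z)$ with your three identities: (a) $c(X,JY,JZ)=c(X,Y,Z)$, (b) $c(X,Y,Z)=-c(X,Z,Y)$, (c) $F(X,Y,Z)+F(JX,JY,Z)=0$, where $F(X,Y,Z):=c(X,Y,Z)-c(Y,X,Z)$. The Levi-Civita chase needs one tensor that is symmetric in two slots and skew in two overlapping slots; here $c$ is skew in $(2,3)$ but \emph{nothing} makes $c$ symmetric in $(1,2)$---condition (c) only says that the $J$-average $S(X,Y,Z):=c(X,Y,Z)+c(JX,JY,Z)$ is symmetric in $(1,2)$, and that $S$ is in turn \emph{not} skew in $(2,3)$. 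Concretely, if you take the alternating cyclic sum of (c) and simplify with (a) and (b), the untwisted terms add up to $2c(X,Y,Z)$ while the $J$-twisted terms are the same cyclic sum applied to $c(J\cdot,J\cdot,\cdot)$, which by (c) equals $-2c(X,Y,Z)$: the identity collapses to $0=0$ and no single copy of $c$ survives. The lemma is nevertheless true, and your scheme can be completed as follows. Real version: by (a), condition (c) is exactly the statement that $B_{X}Y:=\bigl(A^{\ug}_{X}+JA^{\ug}_{JX}\bigr)Y$ is symmetric in $(X,Y)$; each $B_{X}$ commutes with $J$ and $B_{JX}=-JB_{X}$, so $JB_{X}Y=B_{X}JY=B_{JY}X=-JB_{Y}X=-JB_{X}Y$, giving $B\equiv0$, i.e. $A^{\ug}_{JX}=JA^{\ug}_{X}$; since $A^{\ug}_{JX}$ is $g$-skew while $JA^{\ug}_{X}$ is $g$-symmetric (a product of two commuting $g$-skew operators), both vanish and $A^{\ug}\equiv0$. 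Even quicker, complexify: $A^{\ug}_{X}$ commutes with $J$ and hence preserves $T^{1,0}$ and $T^{0,1}$, so the vanishing of the $(1,1)$-torsion, $A^{\ug}_{Z}\overline{W}=A^{\ug}_{\overline{W}}Z$ for $Z,W\in T^{1,0}$, equates an element of $T^{0,1}$ with one of $T^{1,0}$, forcing both to vanish; then $g(A^{\ug}_{Z}W,\overline{U})=-g(W,A^{\ug}_{Z}\overline{U})=0$ and $g(A^{\ug}_{Z}W,U)=0$ by isotropy of $T^{1,0}$, so $A^{\ug}_{Z}W=0$ as well and $A^{\ug}\equiv0$. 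With either completion (the second is essentially the textbook uniqueness proof for the Chern connection), your existence-and-uniqueness-by-dimension-count argument goes through.
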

This connection was introduced by Ehresmann and Libermann in \cite{EL}. Since in the
complex case $\nabla$ coincides with the connection used by Chern in \cite{chern},
it is called the \emph{Chern connection} (it is often also called the \emph{second
canonical Hermitian connection} or simply the \emph{canonical connection}).
\begin{definition}
An \emph{almost Hermitian metric} $g$ is called \emph{Chern flat} if
the holonomy group of the associated Chern connection is trivial.
\end{definition}

On a compact complex manifold $(M,J)$ the existence of a Chern flat metric is equivalent to require that
that $(M,J)$ is complex parallelisable.
We recall that a complex manifold $(M,J)$ is called
\emph{complex parallelizable} if the complex bundle $T^{1,0}M$ is trivial as
holomorphic bundle. This condition is equivalent to require that there exists a
global $(1,0)$-frame $\{Z_1,\dots,Z_n\}$ on $M$ such that the mixed brackets
$[Z_i,Z_{\br}]$ vanish. We recall the following
\begin{theorem}[Wang \cite{wang}]\label{wangth}
Let $(M,J)$ be a compact complex parallelisable manifold. Then there exists a simply-connected complex
Lie group $G$ and a discrete subgroup $\Gamma$ of $G$ such that $(M,J)= G/\Gamma$.
Moreover, the holomorphic cohomology ring of $M$ is isomorphic with the holomorphic
cohomology ring of the Lie algebra of $G$. Finally, $M$ cannot be simply-connected
and it is K\"ahlerian if and only if it is a complex torus.
\end{theorem}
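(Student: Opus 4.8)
The plan is to reconstruct a complex Lie group directly from the holomorphic parallelism and then read off the cohomological and metric assertions. First I would exploit the global holomorphic $(1,0)$-frame $\{Z_1,\dots,Z_n\}$ with vanishing mixed brackets $[Z_i,Z_{\br}]=0$. Since the bracket of two holomorphic vector fields is again holomorphic, each $[Z_i,Z_j]$ is a holomorphic section of $T^{1,0}M$ and can be expanded as $[Z_i,Z_j]=\sum_k f_{ij}^k Z_k$ with $f_{ij}^k$ holomorphic functions on $M$. Compactness forces every global holomorphic function to be constant, so $f_{ij}^k=c_{ij}^k\in\CC$, and the Jacobi identity for vector fields shows that the $c_{ij}^k$ are the structure constants of a complex Lie algebra $\ggo$. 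Let $G$ be the unique simply connected complex Lie group with Lie algebra $\ggo$.

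Next I would produce a biholomorphism between the universal cover $\widetilde M$ and $G$. Dualizing the frame gives holomorphic $(1,0)$-forms $\{\omega^1,\dots,\omega^n\}$ satisfying the Maurer--Cartan structure equations $d\omega^k=-\unm\sum_{i,j}c_{ij}^k\,\omega^i\wedge\omega^j$; equivalently $\omega=\sum\omega^k\otimes e_k$ is a $\ggo$-valued holomorphic $1$-form obeying $d\omega+\unm[\omega,\omega]=0$ (the vanishing of the mixed brackets guarantees that $d\omega^k$ is of pure type $(2,0)$). Comparing $\omega$ with the left Maurer--Cartan form of $G$ and invoking Cartan's theorem on the integrability of Maurer--Cartan systems, one obtains local biholomorphisms from open sets $U\subset M$ to $G$ matching $\omega$ with the Maurer--Cartan form; since $\widetilde M$ is simply connected these patch to a globally defined holomorphic developing map $D\colon\widetilde M\to G$ which is a local biholomorphism. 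Because $M$ is compact the lifted frame is complete, which upgrades $D$ to a covering map, and as $G$ is simply connected $D$ is a biholomorphism. Hence $M=G/\Gamma$ with $\Gamma=\pi_1(M)$ acting by left translations, and properness of the deck action makes $\Gamma$ a discrete cocompact subgroup of $G$.

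For the cohomology statement I would compare the full holomorphic de Rham complex of $M$ with its subcomplex of left-invariant holomorphic forms, which is precisely the Chevalley--Eilenberg complex $(\Lambda^\bullet\ggo^*,d)$ of $\ggo$. The claim is that the inclusion is a quasi-isomorphism of differential graded algebras, so that the holomorphic cohomology ring of $M$ coincides with $H^\bullet(\ggo)$; multiplicativity is automatic since both sides carry the wedge product. Establishing this quasi-isomorphism is the main obstacle: unlike the compact Kähler case one cannot simply harmonically project, and the argument is a Nomizu-type comparison showing that every holomorphic de Rham class admits an invariant representative (via an averaging/spectral-sequence device adapted to complex parallelizable manifolds). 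I would isolate this as the technical heart of the theorem.

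Finally I would settle the two remaining assertions. If $G$ is abelian then $\ggo=\CC^n$, $G=\CC^n$, and $M=\CC^n/\Gamma$ is a complex torus, which is Kähler. Conversely, on a compact Kähler manifold every global holomorphic form is harmonic and hence closed, so $d\omega^k=0$ for all $k$; since the $\omega^i\wedge\omega^j$ with $i<j$ are pointwise linearly independent, this forces $c_{ij}^k=0$, whence $\ggo$ is abelian and $M$ is a complex torus. For non-simply-connectedness, if $\Gamma$ were trivial then $M=G$ would be a positive-dimensional simply connected compact complex Lie group; but the adjoint representation $\Ad\colon G\to\Gl_n(\CC)$ has holomorphic matrix entries, hence constant on the compact $G$, so $\Ad\equiv I$, forcing $G$ abelian and thus $G\cong\CC^n$, which is noncompact for $n>0$ --- a contradiction. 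Therefore $M$ is never simply connected.
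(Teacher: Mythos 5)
The paper itself gives no proof of this statement: it is quoted directly from Wang's paper \cite{wang}, so the only internal ``proof'' is the citation. Judged on its own merits, your reconstruction of the structural part is sound and is essentially Wang's argument: expanding $[Z_i,Z_j]=\sum_k f_{ij}^k Z_k$, using compactness to force the holomorphic functions $f_{ij}^k$ to be constants, obtaining the complex Lie algebra $\ggo$, and then identifying the universal cover with the simply connected group $G$ (your developing-map argument is a correct variant of Wang's, which instead integrates the complete frame fields to a transitive $G$-action with discrete isotropy). Your proofs of the K\"ahler assertion (holomorphic forms on a compact K\"ahler manifold are closed, killing the structure constants) and of non-simple-connectedness (holomorphy of $\Ad$ forces a compact connected complex Lie group to be abelian, hence $\CC^n$, hence noncompact) are also correct.

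The genuine gap is the cohomology statement, and your diagnosis of it points in the wrong direction. You assert that the inclusion of invariant holomorphic forms into all holomorphic forms is a quasi-isomorphism, defer its proof to ``a Nomizu-type comparison'' via an ``averaging/spectral-sequence device,'' and leave it unproved. No such machinery is available or needed: there is no compact group to average over (the deck group $\Gamma$ is infinite and discrete), and Nomizu's theorem concerns the real de Rham cohomology of nilmanifolds, a different and genuinely harder statement. The correct argument is the very compactness principle you already used for the structure constants. Any holomorphic $p$-form $\alpha$ on $M$ expands in the holomorphic coframe as
\begin{equation*}
\alpha=\sum_{i_1<\cdots<i_p} a_{i_1\cdots i_p}\,\omega^{i_1}\wedge\cdots\wedge\omega^{i_p},
\qquad a_{i_1\cdots i_p}=\alpha(Z_{i_1},\dots,Z_{i_p}),
\end{equation*}
and each coefficient is a holomorphic function on the compact connected $M$, hence constant. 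Therefore every holomorphic form on $M$ is invariant, and the complex of holomorphic forms is literally equal to (not merely quasi-isomorphic to) the Chevalley--Eilenberg complex $(\Lambda^{\bullet}\ggo^{*},d)$ of the complex Lie algebra $\ggo$, compatibly with wedge products; the isomorphism of holomorphic cohomology rings is then immediate. With this substitution, your proposal becomes a complete proof.
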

Moreover we have the following
\begin{proposition}\label{wang2}
Let $(M,J)$ be a compact complex manifold. Then $(M,J)$ is complex parallelisable if and only if
there exists a  Chern-flat Hermitian metric on $(M,J)$. Moreover, if $(M=G/\Gamma,J)$ is a compact complex parallelizable manifold, then a Hermitian metric $g$ on $M$ is Chern-flat
if and only if it is left-invariant.
\end{proposition}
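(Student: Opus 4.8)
The plan is to prove the two biconditionals separately, in both cases exploiting that trivial Chern holonomy is equivalent to the existence of a global $\nabla$-parallel unitary $(1,0)$-frame, where $\nabla$ denotes the Chern connection. For the first equivalence, suppose first that $g$ is Chern-flat. Trivial holonomy on the connected manifold $M$ yields a global $\nabla$-parallel frame, and since $\nabla J=0$ this frame can be taken of type $(1,0)$, say $\{Z_1,\dots,Z_n\}$. Because $\nabla Z_i=\nabla Z_{\bj}=0$, the torsion evaluates as $T(Z_i,Z_{\bj})=-[Z_i,Z_{\bj}]$; but $T$ has vanishing $(1,1)$-part by the defining property of the Chern connection, so $[Z_i,Z_{\bj}]=0$ for all $i,j$, which is exactly the condition that $(M,J)$ be complex parallelisable. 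Conversely, given a global $(1,0)$-frame $\{Z_i\}$ with $[Z_i,Z_{\bj}]=0$, I would define $g$ by declaring $\{Z_i\}$ unitary and let $\nabla$ be the connection for which each $Z_i$ is parallel. One checks directly that $\nabla g=0$, that $\nabla J=0$ (the $(1,0)$-type is preserved), and that $T(Z_i,Z_{\bj})=-[Z_i,Z_{\bj}]=0$, so the $(1,1)$-part of the torsion vanishes; by the uniqueness statement in Section~\ref{pre}, $\nabla$ is the Chern connection of $g$, and it is flat with trivial holonomy since $\{Z_i\}$ is a global parallel frame.

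For the ``moreover'' part, write $M=G/\Gamma$ with $G$ a complex Lie group and fix the global holomorphic $(1,0)$-frame $\{Z_i\}$ coming from a basis of $\ggo^{1,0}$; as $G$ is a complex Lie group these satisfy $[Z_i,Z_{\bj}]=0$. If $g$ is left-invariant, choosing the $Z_i$ unitary and applying the construction above shows that $g$ is Chern-flat. The substantial direction is the converse. Assume $g$ is Chern-flat and let $\{E_i\}$ be a global $\nabla$-parallel unitary $(1,0)$-frame. Since $\{Z_i\}$ is a holomorphic frame, the Chern connection satisfies $\nabla_{Z_{\bj}}Z_k=0$ (its $(0,1)$-part is $\bar\partial$ and $\bar\partial Z_k=0$). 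Writing $E_i=\sum_k a_i^k Z_k$ with smooth functions $a_i^k\colon M\to\CC$, the condition $\nabla_{Z_{\bj}}E_i=0$ becomes $\sum_k Z_{\bj}(a_i^k)\,Z_k=0$, so each $a_i^k$ is holomorphic.

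Here is the crux: $M$ is compact and connected, so every holomorphic function on $M$ is constant; hence the $a_i^k$ are constants and each $E_i$ is a constant-coefficient combination of the left-invariant fields $Z_k$, i.e. $E_i$ is itself left-invariant. As $\{E_i\}$ is a left-invariant frame which is $g$-unitary at every point, the metric $g$ is constant on left-invariant fields and therefore left-invariant. I expect the main obstacle to be precisely this last step: isolating why the change-of-frame matrix must be holomorphic, which rests on identifying the Chern connection with the canonical connection of the holomorphic Hermitian bundle $T^{1,0}M$ so that $\nabla^{0,1}=\bar\partial$, and then invoking compactness to force constancy. Everything else reduces to a direct verification of the defining properties of the Chern connection together with its uniqueness.
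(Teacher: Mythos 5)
Your proof is correct. Note, however, that the paper itself gives no proof of Proposition \ref{wang2}: it is stated in the preliminaries as a recollection of known facts (going back to Wang's theorem and to \cite{DV}), so there is no in-paper argument to compare yours against line by line. That said, your main tool is exactly the one the paper deploys elsewhere: constructing a flat Hermitian connection by declaring a unitary $(1,0)$-frame parallel and then invoking the uniqueness of the Chern connection is precisely the argument used for the last assertion in the paper's proof of Theorem \ref{th1}, so your treatment of ``complex parallelisable $\Rightarrow$ Chern-flat'' and of ``left-invariant $\Rightarrow$ Chern-flat'' is in the spirit of the paper. The genuinely substantial direction, Chern-flat $\Rightarrow$ left-invariant, which the paper nowhere spells out, is handled correctly: you get $\nabla_{Z_{\bj}}Z_k=0$, deduce holomorphicity of the transition functions $a_i^k$ from $\nabla_{Z_{\bj}}E_i=0$, and conclude constancy from compactness of $M$. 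One small simplification is available there: you do not need to import the identification of the Chern connection with the canonical connection of the holomorphic bundle $T^{1,0}M$ (i.e. $\nabla^{0,1}=\overline{\partial}$). Since the left-invariant holomorphic frame satisfies $[Z_k,Z_{\bj}]=0$ (because $[\ggo^{1,0},\ggo^{0,1}]=0$ for a complex Lie group, as the paper notes), the vanishing of the $(1,1)$-part of the torsion gives $\nabla_{Z_k}Z_{\bj}=\nabla_{Z_{\bj}}Z_k$; the left-hand side is of type $(0,1)$ and the right-hand side of type $(1,0)$ (because $\nabla J=0$), so both vanish, which is all you use.
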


Note that if $(M= G/\Gamma,J)$ is a compact complex parallelisable manifold, then $J$ induces a
bi-invariant complex structure on the Lie algebra $\mathfrak{g}$ of  $G$. \emph{Bi-invariant} means that $[JX,Y]=J[X,Y]$ for every
$X,Y\in\mathfrak{g}$. Then if $\mathfrak{g}$ is the Lie algebra of $G$ we have $[\mathfrak{g}^{1,0},\mathfrak{g}^{0,1}] = 0$.

Now we recall some facts about the geometry of Chern flat metric on almost complex manifold.
Any almost Hermitian manifold $(M,g,J)$ has a natural non-degenerate $2$-form  $\omega$ defined by the relation $\omega(\cdot,\cdot)=g(\cdot,J\cdot)$. An
almost Hermitian manifold $(M,g,J)$ is called \emph{quasi-K\"ahler} (or
$(1,2)$-symplectic) if $\overline{\partial}\omega=0$, where $\overline{\partial}$ is
computed with respect to $J$. The quasi-K\"ahler condition is equivalent to require
that the vector field $D_{\overline{Z}_1}Z_2$ is of type $(1,0)$ for any pair
$(Z_1,Z_2)$ of vector fields of type $(1,0)$, where $D$ denotes the Levi-Civita
connection of $g$. We have the following
\begin{theorem}[\cite{DV}, Theorem 1]\label{DVth1}
Let $(M,J)$ be a compact almost complex manifold. The following facts are
equivalent:
\begin{enumerate}
\item[1.] $(M,J)$ admits a quasi-K\"ahler Chern flat metric$;$
\vspace{0.1cm}
\item[2.] there exists a global $(1,0)$-coframe $\{\zeta_1,\dots,\zeta_n\}$ on $M$ such that $\partial\zeta_r=\overline{\partial}\zeta_r=0\,;$
\vspace{0.1cm}
\item[3.] $(M,J)$ is a $2$-step nilmanifold equipped with a left-invariant almost complex structure such that
\begin{equation}\label{J[}
[JX,Y]=-J[X,Y]
\end{equation}
for any pair $(X,Y)$ of left-invariant vector fields.
\end{enumerate}
Moreover, if one of the above condition holds , then a Hermitian metric $g$ on $M$ is quasi-K\"ahler Chern flat
if and only if it is left-invariant.
\end{theorem}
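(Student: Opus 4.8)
The plan is to realize the structure by a compatible metric, read all three conditions off a single global Chern-parallel frame, and then prove a constancy (rigidity) statement that promotes the resulting bracket relations to a left-invariant Lie algebra structure.

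First I would assume (1): a quasi-K\"ahler Chern-flat metric $g$. Since the Chern connection $\nabla$ has trivial holonomy, parallel transport is path-independent, so (using $\nabla J=\nabla g=0$) there is a global $\nabla$-parallel unitary $(1,0)$-frame $\{Z_1,\dots,Z_n\}$ with dual coframe $\{\zeta_1,\dots,\zeta_n\}$. In this frame the torsion satisfies $T(X,Y)=-[X,Y]$ on frame fields. The defining property of the Chern connection, vanishing of the $(1,1)$-part of $T$, amounts to $T(JX,JY)=-T(X,Y)$, and evaluating on $(Z_i,\overline{Z}_j)$ forces $T(Z_i,\overline{Z}_j)=0$; hence the mixed brackets vanish, $[Z_i,\overline{Z}_j]=0$, which dually is $\overline\partial\zeta_r=0$.

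Next I would feed in the quasi-K\"ahler hypothesis. Because the frame is $\nabla$-parallel, the metric coefficients $g(Z_i,\overline{Z}_j)$ are constant, so the Koszul formula for the Levi-Civita connection $D$ collapses to $2g(D_{\overline{Z}_i}Z_j,Z_k)=-g([Z_j,Z_k],\overline{Z}_i)$ once the vanishing mixed brackets are discarded. The quasi-K\"ahler condition, namely that $D_{\overline{Z}_i}Z_j$ be of type $(1,0)$, is exactly $g(D_{\overline{Z}_i}Z_j,Z_k)=0$ for all $i,j,k$, i.e. the $(1,0)$-part of every $[Z_j,Z_k]$ vanishes: $[Z_i,Z_j]\in T^{0,1}M$, equivalently $\partial\zeta_r=0$. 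This yields (1)$\Leftrightarrow$(2), and a direct expansion in the frame shows that the two relations $[Z_i,\overline{Z}_j]=0$ and $[Z_i,Z_j]\in T^{0,1}M$ are together equivalent to the anti-bi-invariance $[JX,Y]=-J[X,Y]$ for real $X,Y$.

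The crux is the passage (2)$\Rightarrow$(3), i.e. showing that the structure is left-invariant. From $\partial\zeta_r=\overline\partial\zeta_r=0$ we have $d\zeta_r=\sum_{i<j}f^r_{ij}\,\overline\zeta_i\wedge\overline\zeta_j$ for functions $f^r_{ij}$; expanding $0=d^2\zeta_r$ and isolating the $(1,2)$-component gives $Z_s f^r_{ij}=0$ for all $s$, that is $\partial f^r_{ij}=0$. I would then upgrade this to constancy: since $Z_s f^r_{ij}=0$ and $[Z_s,\overline{Z}_s]=0$, the second-order operator $\sum_s(Z_s\overline{Z}_s+\overline{Z}_s Z_s)$ annihilates $f^r_{ij}$; this operator is elliptic, has real coefficients and no zeroth-order term, so the maximum principle on the compact connected manifold $M$ forces each $f^r_{ij}$ to be constant. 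This is the step where compactness is essential and is the quasi-K\"ahler analogue of Wang's rigidity; I expect it to be the main technical obstacle. With constant structure functions the $Z_r$ span a real $2n$-dimensional Lie algebra $\mathfrak{g}$, and the Jacobi identity together with $[Z_i,\overline{Z}_j]=0$ makes all triple brackets vanish, so $\mathfrak{g}$ is $2$-step nilpotent; compactness then identifies $M$ with a quotient $G/\Gamma$ of the simply connected nilpotent group $G$ by a lattice (Malcev), with the frame left-invariant. The converse (3)$\Rightarrow$(1) and the final ``Moreover'' are then direct constant-coefficient verifications: on such a nilmanifold a left-invariant Hermitian metric is automatically Chern-flat and quasi-K\"ahler, while any quasi-K\"ahler Chern-flat metric, by the constancy argument, has constant coefficients in a left-invariant Chern-parallel frame and is therefore itself left-invariant.
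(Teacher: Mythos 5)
First, a structural point: the paper does not actually prove this theorem --- it is imported verbatim from \cite{DV} (Theorem 1), so there is no in-paper proof to compare against. Judged on its own merits, your proposal reconstructs essentially the argument of that cited source, and its main chain (1)$\Rightarrow$(2)$\Rightarrow$(3)$\Rightarrow$(1) is sound: trivial Chern holonomy yields a global parallel unitary $(1,0)$-frame; vanishing of the $(1,1)$-part of the torsion gives $[Z_i,\overline{Z}_j]=0$ (dually $\overline{\partial}\zeta_r=0$); the collapsed Koszul formula converts quasi-K\"ahlerity into $[Z_i,Z_j]\in T^{0,1}M$ (dually $\partial\zeta_r=0$); the $(1,2)$-component of $d^2\zeta_r=0$ gives $Z_sf^r_{ij}=0$; and your rigidity step is correct, since $\sum_s(Z_s\overline{Z}_s+\overline{Z}_sZ_s)=\tfrac12\sum_s(X_s^2+Y_s^2)$ for the real frame defined by $Z_s=\tfrac12(X_s-\im Y_s)$, an elliptic operator with no zeroth-order term, so the strong maximum principle applied to the real and imaginary parts of $f^r_{ij}$ on the compact connected $M$ forces constancy. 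The integration of the resulting parallelism to $M\cong G/\Gamma$ and the $2$-step nilpotency via Jacobi plus vanishing mixed brackets are also correct (only the attribution is off: this is the Wang-type integration argument; Malcev's theorems concern existence and rigidity of lattices, which is not what is needed here).

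The one genuine soft spot is the final ``Moreover''. You claim an arbitrary quasi-K\"ahler Chern-flat metric $g'$ ``has constant coefficients in a left-invariant Chern-parallel frame'', but the Chern-parallel frame of $g'$ is not yet known to be left-invariant --- that is exactly the assertion being proved, so as phrased the claim is circular --- and the constancy lemma cannot be applied directly to the functions $g'(Z_i,\overline{Z}_j)$, since nothing shown so far implies they are annihilated by the $Z_s$ or the $\overline{Z}_s$. The gap closes with tools you already have: let $\{W_s\}$ be the $g'$-Chern-parallel unitary frame produced by your step (1)$\Rightarrow$(2) applied to $g'$, and write $W_s=\sum_t A_{st}Z_t$ with $(A_{st})$ pointwise invertible (both are frames of $T^{1,0}M$ for the same $J$). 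Expanding $0=[W_s,\overline{W}_j]$ and using $[Z_t,\overline{Z}_k]=0$, the $(1,0)$-component reads $\sum_{k,t}\overline{A_{jk}}\,(\overline{Z}_kA_{st})\,Z_t=0$, and invertibility of $(A_{jk})$ forces $\overline{Z}_kA_{st}=0$ for all indices; your constancy lemma (applied to the conjugates $\overline{A_{st}}$) then makes every $A_{st}$ constant, hence $g'(Z_i,\overline{Z}_j)=\sum_s(A^{-1})_{is}\overline{(A^{-1})_{js}}$ is constant and $g'$ is left-invariant. With this patch the proposal is complete and correct.
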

This motivates the following
\begin{definition}[\cite{DV}] An almost complex manifold $(M,J)$ is called \emph{quasi-K\"ahler Chern flat}, if there exists a quasi-K\"ahler Chern flat metric $g$ on $M$ compatible to $J$.
\end{definition}
Theorem \ref{DVth1} motives the study of almost complex Lie algebras equipped with an
almost complex structure satisfying condition \eqref{J[}. We can give the following
\begin{definition}
An almost complex structure $J$ on a Lie algebra $(\mathfrak{g},\lb)$ is \emph{anti-bi-invariant} if $[JX,Y]=-J[X,Y]$ for any $X,Y\in\mathfrak{g}$.
\end{definition}

%
%

\subsection{Gray conditions} In \cite{Gray} Gray considered some special classes of almost Hermitian manifolds characterized by some
identities involving the curvature tensor.
\begin{definition}\emph{
Let $(M,g,J)$ be an almost Hermitian manifold and let $R$ be
the curvature tensor of $g$. Then $R$ is said to satisfy}
\begin{itemize}
\item \emph{the \emph{first Gray identity} $({\rm G}_1)$ if
$R(Z_1,Z_2,\cdot,\cdot)=0$};

\vspace{0.1cm} \item \emph{the \emph{second Gray identity} $({\rm G}_2)$
if $R(Z_1,Z_2,Z_3,Z_4)=R(\overline{Z}_1,Z_2,Z_3,Z_4)=0$};

\vspace{0.1cm} \item \emph{the \emph{third Gray identity} $({\rm G}_3)$
if $R(\overline{Z}_1,Z_2,Z_3,Z_4)=0$};
\end{itemize}
\emph{for every $Z_1,Z_2,Z_3,Z_4\in\Gamma(T^{1,0}M)$. }
\end{definition}
Such identities are usually called  \emph{Gray identities}
and play a central role in the decomposition of the curvature of an almost Hermitian metric (see \cite{tri}).
In the present paper the second Gray identity plays a role in Theorem \ref{th1}.

We finish this section recalling Gray identities in their real version:
\begin{eqnarray*}
({\rm G} 1) && R(JX,JY,Z,T) = R(X,Y,Z,T)\,;\\
\vspace{0.1cm}
({\rm G} 2)&& R(X,Y,Z,W)-R(JX,JY,Z,W)=R(JX,Y, JZ,W)+R(JX,Y, Z,JW)\,;\\
\vspace{0.1cm} ({\rm G} 3) && R(JX,JY, JZ,JT) =R(X,Y,Z,T)\,.
\end{eqnarray*}

\section{Proofs of Theorem \ref{th1} and Theorem \ref{th2}}
Let $(G,J)$ be a complex Lie group, $\Gamma\subseteq G$ be a lattice, $M=G/\Gamma$ and let $g$ be a left-invariant Hermitian
metric on $M$. We denote by $(\mathfrak{g},\lb)$ the Lie algebra
of $G$, by $\mathfrak{z}$ the center of $\mathfrak{g}$ and by $\mathfrak{z}^\perp$ the orthogonal
complement of $\mathfrak{z}$. As stated in the introduction, we denote by $J^-$ the almost complex structure on $M$
induced by the relation
\begin{equation}
J^{-}(X)=\begin{cases}
+J(X)\quad \mbox{if }X\in \mathfrak{z} \\
-J(X)\quad \mbox{if }X\in \mathfrak{z}^\perp\,.
\end{cases}
\end{equation}

\begin{proof}[Proof of Theorem $\ref{th1}$]
Let $\{Z_1,\dots,Z_n\}$ be a left-invariant $(1,0)$-frame with respect to $J$.
Let $D^g$ be the Levi-Civita connection of $g$ and let $R^g$ be its curvature tensor. By using Koszul formula,
$[\mathfrak{g}^{1,0},\mathfrak{g}^{0,1}] = 0$ and $[\mathfrak{g}^{1,0},\mathfrak{g}^{1,0}] \subset \mathfrak{g}^{1,0} $ we get
\[
D^g_{Z_i} Z_j = \frac12\,[Z_i,Z_j] \, \,.
\]
Then
$$
\begin{aligned}
R^g(Z_i,Z_j)Z_k&= D^g_{Z_i}D^g_{Z_j}Z_k-D^g_{Z_j}D^g_{Z_i}Z_k-D^g_{[Z_i,Z_j]}Z_k\\
             &= \frac14\,\Big\{[Z_i,[Z_j,Z_k]] - [Z_j,[Z_i,Z_k]] - 2[[Z_i,Z_j],Z_k]\Big\}\\
             &= \frac14\,\Big\{[Z_i,[Z_j,Z_k]] + [Z_j,[Z_k,Z_i]] + 2[Z_k,[Z_i,Z_j]]\Big\}=\frac14 [Z_k,[Z_i,Z_j]]\,.
\end{aligned}
$$

Notice that the above formula for $R^g$ in terms of Lie brackets of left-invariant vector fields also holds for any other left-invariant Hermitian metric $g'$ on $G/\Gamma$. Indeed,
\[
R^g(W_1,W_2)W_3 = R^{g'}(W_1,W_2)W_3 = \frac14\,[W_3,[W_1,W_2]] \,
\]
for all triple
of left-invariant $(1,0)$-vector fields
$W_1,W_2,W_3$. This shows that the second Gray identity holds for $R^g$ if and only if it holds for $R^{g'}$ and we get $1\iff 2$.
Since $[Z,\overline{W}]=0$ for all left-invariant vector fields of type $(1,0)$, the above formula for $R^g$ implies that $1\iff 4$.

\smallskip

Let us prove $3 \Longrightarrow 4$. Let $\{Z_1,\dots,Z_n\}$ be a left-invariant $(1,0)$-frame with respect to $J^-$. Since  $J^{-}$ is anti-bi-invariant, then $[Z_i,Z_{\br}]=0$.
Moreover, the assumption that $(g,J^-)$ is quasi-K\"ahler implies  $ [Z_i,Z_r] \in\mathfrak{g}^{0,1}_{J^-}$
for $1 \leq i,r \leq n$. Indeed,
$$
\begin{aligned}
g([Z_i,Z_r],Z_{\bk}) &=g(D_i Z_r,Z_{\bk})- g(D_r Z_i,Z_{\bk})=-g(Z_r,D_i Z_{\bk})+ g(Z_i,D_r Z_{\bk})\\
                     &= -g(Z_r,D_{\bk} Z_{i})+ g(Z_i,D_{\bk} Z_{r})=0\,.
\end{aligned}
$$
This implies the following relations
$$
[\mathfrak{g}_{J^-}^{1,0},\mathfrak{g}_{J^-}^{1,0}] \subset \mathfrak{g}_{J^-}^{0,1}\,,\quad
[\mathfrak{g}_{J^-}^{0,1},\mathfrak{g}_{J^-}^{0,1}] \subset \mathfrak{g}_{J^-}^{1,0}\,,\quad
[\mathfrak{g}_{J^-}^{1,0},\mathfrak{g}_{J^-}^{0,1}]=0\,.
$$
that force $\mathfrak{g}$ to be $2$-step nilpotent.

\smallskip

Now we show $4 \Longrightarrow 3$. Since G is $2$-step nilpotent, then  $J_-$ is anti-bi-invariant.
That implies
$$
[\mathfrak{g}_{J^-}^{1,0},\mathfrak{g}_{J^-}^{1,0}] \subset \mathfrak{g}_{J^-}^{0,1}\,,\quad
[\mathfrak{g}_{J^-}^{0,1},\mathfrak{g}_{J^-}^{0,1}] \subset \mathfrak{g}_{J^-}^{1,0}\,,\quad
[\mathfrak{g}_{J^-}^{1,0},\mathfrak{g}_{J^-}^{0,1}]=0\,.
$$
Using Koszul's formula we obtain that $D_{\overline Z}W\in \mathfrak{g}_{J_-}^{1,0}$ for all $Z,W\in \mathfrak{g}_{J_-}^{1,0}$, which is equivalent to the quasi-K\"ahler condition.

%

\smallskip

In order to show the last part of the statement we denote by $\nabla^g$ the Chern connection associated to $(g,J^-)$.
Let $Z = \{Z_1,\dots,Z_n\}$ be a $g$-unitary $(1,0)$-frame with respect to $J^-$ and
let $\nabla^Z$ be the flat connection associated to the frame $Z$. That means that $\nabla^Z$ satisfies $\nabla^Z Z_i=0$ for every
$i=1,\dots,n$. Then $\nabla^Z J^- = \nabla^Z g = 0$ and the $(1,1)$-part of the torsion of $\nabla^Z$ vanishes everywhere since $J^-$ is anti-bi-invariant. Then by the uniqueness of the Chern connection we get that $\nabla^Z = \nabla^g$, as required.
\end{proof}

\begin{remark}
We recall that if $(M=G/\Gamma,J)$ is a compact complex parallelisable manifold, then any left-invariant metric $g$ on $M$ is automatically
cosymplectic (see \cite{Abb}). Cosymplectic means that the Liouville $1$-form $d^*\omega$ associated to the fundamental form of $g$ vanishes everywhere on $M$.
\end{remark}
\subsection{Proof of Theorem \ref{th2}}
Let $(M,g)$ be a compact Riemannian manifold. As stated in the introduction, we consider the following spaces
\begin{itemize}
\item $\mathcal{Z}_g=\Big\{\mbox{almost complex structures on $M$ compatible to $g$}\Big\}\,;$

\vspace{0.1cm}
\item $\mathcal{Q}\mathcal{K}^{0}_{g}=\Big\{J\in\mathcal{Z}_g\,:\mbox{ $(g,J)$ is a quasi-K\"ahler Chern flat almost Hermitian structure} \Big\}$\,;

\vspace{0.1cm}
\item $\mathcal{CP}^{2}_{g}=\Big\{ J\in\mathcal{Z}_g\,:\mbox{ $(g,J)$ is a Chern-flat Hermitian structure and $R^g$ satisfies $({\rm G}_2)$} \Big\}$\,.
\end{itemize}

\begin{proof}[Proof of Theorem $\ref{th2}$]
Let $J\in\mathcal{Z}_g$ be a complex structure such that
$(g,J)$ is Chern flat. Then $J$ allows us to write $M=G/\Gamma$, for a Lie group $G$ and a lattice $\Gamma$ and $g$ is a left-invariant metric on $M$. If $J\in\mathcal{CP}^{2}_{g}$, then in view of
Theorem \ref{th1} $G$ is $2$-step nilpotent and  $J^-$ is in $\mathcal{Q}\mathcal{K}^{0}_{g}$. The map $J\mapsto J^-$ is clearly injective. In order to show that such a map is also surjective, let $J\in\mathcal{Q}\mathcal{K}^{0}_{g}$. Then in view of Theorem \ref{DVth1}, $J$ induces a structure of $2$-step nilmanifold on $M$ with respect to which $g$ and  $J$ are left-invariant. Hence by setting
$$
J^{+}(X)=\begin{cases}
+J(X)\quad \mbox{if }X\in \mathfrak{z} \\
-J(X)\quad \mbox{if }X\in \mathfrak{z}^\perp\,.
\end{cases}
$$
we obtain a complex parallelisable structure such that $J^+\mapsto J$.
\end{proof}

\begin{example}
Let $M= G/\Gamma$ be the Iwasawa manifold. We recall that this manifold is defined as
the quotient $G/\Gamma$, where $G$ is the $3$-dimensional complex Heisenberg group
$$
G=\left\{\left(
\begin{array}{ccc}
1           &z_1          & z_2 \\
0           &1            &z_3\\
0           &0            &1
\end{array}
\right)\,\,:\,\,z_1,z_2,z_3\in\mathbb{C}\right\}
$$
and $\Gamma$ is the subgroup of $G$ defined by restriction $z_i$ to be Gaussian
integers. Let $\mathfrak{g}$ be the Lie algebra of $G$. Then $\mathfrak{g}$ has a coframe
$\{e^1,\dots,e^6\}$ satisfying
$$
\begin{cases}
de^i=0\,,\quad i=1,2,3,4\,,\\
de^5=e^{13}+e^{42}\,,\\
de^6=e^{14}+e^{23}\,.
\end{cases}
$$
$M$ has the canonical Hermitian structure structure $(g,J_0)$, where $g=\sum_{i=1}^6
e^i\otimes e^i$ and $J_0$ is determined by the relations
$$
J_0e^1=-e^2\,,\quad J_0e^3=-e^4\,,\quad J_0e^5=-e^6\,.
$$
In terms of complex frame, $J_0$ has the following structure equations
$$
[Z_1,Z_2]=Z_{3}
$$
and the other brackets vanishes, $\{Z_1,Z_2,Z_3\}$ being the canonical
left-invariant unitary frame $e_i=\frac12 (e_i-\operatorname{i}J_0 e_i)$, $\{e^1,\dots,e^6\}$ being the dual frame associated to
$\{e_1,\dots, e_6\}$.
Hence
$(M,J_0)$ is a compact complex parallelisable $2$-step nilmanifold; moreover, since
$G$ is $2$-step nilpotent, the metric $g$ satisfies the second Gray identity with
respect to $J_0$. Since the center of $\mathfrak{g}$ is spanned by $e_5,e_6$, the
almost complex structure $J_{0}^{-}$ associated to $J_0$ has equations
$$
J_{0}^-e^1=-e^2\,,\quad J_{0}^{-}e^3=-e^4\,,\quad J_{0}^{-}e^5=e^6\,.
$$
This almost complex structure is quasi-K\"ahler Chern-flat and correspond to the
almost complex structure $J_3$ of \cite{AGS}.
\end{example}

\subsubsection{The {\rm SKT} condition}
In this subsection we consider compact SKT manifolds with trivial Chern holonomy. We
recall that an Hermitian manifold $(M,g,J)$ is called SKT (strong K\"ahler with
torsion) if its K\"ahler form $\omega$ satisfies
$$
\overline{\partial}\partial \omega=0\,.
$$
We have the following
\begin{proposition}
Let $(M,g,J)$ be a compact Chern-flat {\rm SKT} manifold. Then $(M,g,J)$ is a flat
K\"ahler torus.
\end{proposition}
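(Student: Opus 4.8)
The plan is to reduce the statement to a purely algebraic fact about the complex Lie algebra underlying $M$ and then to show that the SKT condition forces that algebra to be abelian. Since $(M,g,J)$ is compact, Hermitian and Chern-flat, Proposition \ref{wang2} lets me write $M=G/\Gamma$ with $G$ a simply connected complex Lie group, $g$ left-invariant and $J$ the induced bi-invariant complex structure on $\mathfrak{g}$. I fix a $g$-unitary left-invariant $(1,0)$-coframe $\{\zeta^1,\dots,\zeta^n\}$. As $\mathfrak{g}$ is a complex Lie algebra we have $[\mathfrak{g}^{1,0},\mathfrak{g}^{0,1}]=0$ and $[\mathfrak{g}^{1,0},\mathfrak{g}^{1,0}]\subset\mathfrak{g}^{1,0}$, so every $d\zeta^k$ is of pure type $(2,0)$; writing $d\zeta^k=-\sum_{i<j}c^k_{ij}\,\zeta^i\wedge\zeta^j$ encodes the structure constants of $\mathfrak{g}$.

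Next I would compute the SKT tensor for $\omega=\im\sum_k\zeta^k\wedge\overline{\zeta}^k$. Because $d\zeta^k$ has type $(2,0)$ and $d^2=0$, its $(2,1)$-part $\overline{\partial}\,d\zeta^k$ vanishes, while $\overline{\partial}\,\overline{\zeta}^k=d\overline{\zeta}^k=\overline{d\zeta^k}$. A short computation then yields $\partial\omega=\im\sum_k d\zeta^k\wedge\overline{\zeta}^k$ and
\[
\overline{\partial}\partial\omega=\im\sum_k d\zeta^k\wedge\overline{d\zeta^k}\,,
\]
a $(2,2)$-form which, by hypothesis, vanishes identically.

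The decisive step is to read off the structure constants from this vanishing. Expanding in the basis $\{\zeta^i\wedge\zeta^j\wedge\overline{\zeta}^p\wedge\overline{\zeta}^q\}_{i<j,\,p<q}$ of $(2,2)$-forms, the coefficient of $\zeta^i\wedge\zeta^j\wedge\overline{\zeta}^i\wedge\overline{\zeta}^j$ is $\sum_k|c^k_{ij}|^2$, and no other index pair contributes to this basis element. Hence $\sum_k|c^k_{ij}|^2=0$ for every $i<j$, which forces $c^k_{ij}=0$ for all $i,j,k$; that is, $\mathfrak{g}$ is abelian. I expect this extraction — isolating the diagonal terms and exploiting their manifest positivity — to be the heart of the proof, the computation of $\overline{\partial}\partial\omega$ being otherwise routine bookkeeping with the structure equations.

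It then remains to conclude. With $\mathfrak{g}$ abelian, $G\cong\CC^n$ and $M=G/\Gamma$ is a complex torus. Since all brackets vanish, Koszul's formula shows the Levi-Civita connection acts trivially on left-invariant fields, so $g$ is flat, and $d\omega=0$ shows that $g$ is K\"ahler. Therefore $(M,g,J)$ is a flat K\"ahler torus, as claimed.
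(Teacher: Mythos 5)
Your proof is correct and follows essentially the same route as the paper: reduce via Chern-flatness and Wang's theorem to a left-invariant Hermitian structure on a quotient of a complex Lie group, compute $\overline{\partial}\partial\omega = \im\sum_k d\zeta^k\wedge\overline{d\zeta^k}$ in a unitary invariant coframe, and extract $c^k_{ij}=0$ from the vanishing of the diagonal coefficients $\sum_k|c^k_{ij}|^2$. In fact you spell out the positivity step (isolating the coefficient of $\zeta^i\wedge\zeta^j\wedge\overline{\zeta}^i\wedge\overline{\zeta}^j$) more explicitly than the paper does, which simply asserts that the SKT equation forces the structure constants to vanish.
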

\begin{proof}
Since the metric $g$ is by hypothesis Chern-flat and $J$ is integrable, $M$ is a
quotient of a Lie group by a lattice and $(g,J)$ is a left-invariant Hermitian
structure on $M$. Let $\{\zeta^1,\dots,\zeta^n\}$ be a left invariant unitary
coframe on $M$. Since $g$ is Chern-flat and $J$ is integrable we get
$$
\overline{\partial}\zeta_i=\partial\zeta_{\bi}=0\,.
$$
Moreover we can write $\omega=\frac{\operatorname{i}}{2}\sum_{i=1}^n\zeta^{i\bi}$.
Hence we can write
$$
\begin{aligned}
2\operatorname{i}\overline{\partial}\partial
\omega=&\sum_{i=1}^n\overline{\partial}\partial(\zeta^{i\bi})\sum_{i,r,s=1}^n\overline{\partial}(c_{rs}^{i}\zeta^{rs\bi})\sum_{i,r,s,m,l=1}^nc_{rs}^{i}c_{\bm\bl}^{\bi}\zeta^{rs\bm\bl}\,.
\end{aligned}
$$
Hence  the SKT equation $\overline{\partial}\partial \omega=0$ implies
$c_{rs}^{i}=0$, which forces $(M,g,J)$ to be a flat K\"ahler torus.
\end{proof}

\section{Anti-bi-invariant structures}\label{alg}
In this section, we consider the problem
of classifying $2$-step nilpotent Lie algebras ({\it $2$-step algebras} for short
from now on) admitting anti-bi-invariant structures. We first show that such Lie
algebras are in bijection with complex $2$-step Lie algebras.  The correspondence is
given by a simple conjugation process, which in particular interchange the
complexification and the anti-complexification of a given real $2$-step Lie algebra.
As an application, we give explicit examples of $2$-step Lie algebras admitting an
anti-bi-invariant structure which do not come from the anti-complexification
procedure.  We finally study the existence of canonical almost Hermitian metrics for
a given ant-bi-invariant structure, that is, canonical metrics on a given
quasi-K\"ahler Chern-flat manifold.

\subsection{Almost complex Lie algebras}\label{dec}
Let $G$ be a real $2n$-dimensional Lie group with Lie algebra $\ggo$. An invariant
{\it almost complex} structure on $G$ is defined by a linear map
$J:\ggo\longrightarrow\ggo$ satisfying $J^2=-I$.  The pair $(\ggo,J)$ is called an
{\it almost complex Lie algebra}.  Two almost complex Lie algebras $(\ggo_1,J_1)$
and $(\ggo_2,J_2)$ are said to be {\it isomorphic} if there exists a Lie algebra
isomorphism $f:\ggo_1\longrightarrow\ggo_2$ such that $J_2=fJ_1f^{-1}$.  In order to
study the space of all almost complex Lie algebras of a given dimension, we consider
in what follows an approach based on varying Lie brackets rather than the structures
$J$'s on a fixed Lie algebra.

Let $W$ be a real $2n$-dimensional vector space and let $J \in\End(W)$ be a complex
structure, i.e. $J^2 = -{\rm Id}$, which will be fixed from now on.  Let
$V=\Lambda^2(W^*)\otimes W$ be the vector space of all skew-symmetric bilinear forms
from $W\times W$ on $W$.  Then
$$
\lca=\{\lb\in V:\lb\;\mbox{satisfies the Jacobi identity}\},
$$
is an algebraic subset of $V$ as the Jacobi identity can be written as zeroes of
polynomial functions.  $\lca$ is often called the {\it variety of Lie algebras} (of
dimension $2n$).  We note that each point $\lb$ of the variety $\lca$ can be
identified with an almost complex manifold by left translating $J$ on the simply
connected Lie group with Lie algebra $(W,\lb)$.

There is a natural action of $\Gl_{2n}(\RR):=\Gl(W)$ on $V$ given by
\begin{equation}\label{action}
(g.\lb)(X,Y)=g[g^{-1}X,g^{-1}Y], \qquad X,Y\in W, \quad g\in\Gl_{2n}(\RR),\quad
\lb\in V.
\end{equation}

Thus $\lca$ is $\Gl_{2n}(\RR)$-invariant and Lie algebra isomorphism classes are
precisely $\Gl_{2n}(\RR)$-orbits.  Let $\Gl_n(\CC)$ denote the subgroup of
$\Gl_{2n}(\RR)$ of those elements commuting with $J$.  We note that from this point
of view, two almost complex Lie algebras $\lb_1,\lb_2\in\lca$ are isomorphic if and
only if their $\Gl_n(\CC)$-orbits coincide, and a $\Gl_{2n}(\RR)$-orbit in $\lca$
parameterizes the set of all invariant almost complex structures on a given Lie
group.

We consider the following $\Gl_n(\CC)$-invariant subspaces of $V$:

\begin{itemize}
\item $V({\rm int}) := \{\lb \in V \, : \, [JX,JY]=[X,Y] + J[JX,Y] + J[X,JY] \}$,
\vspace{0.2cm}
\item $V({\rm ab}) := \{ \lb \in V \, : \, [JX,JY]=[X,Y] \}$,
\vspace{0.2cm}
\item $V(\mathbb{C}) := \{ \lb \in V \, : \, [JX,Y]=J[X,Y] \}$,
\vspace{0.2cm}
\item $V({\rm Ch}) := \{ \lb \in V \, : \, [JX,Y]=[X,JY] \}$,
\vspace{0.2cm}
\item $V(\overline{\mathbb{C}}) := \{ \lb \in V \, : \, [JX,Y]=-J[X,Y] \}$.
\end{itemize}

An almost complex Lie algebra $\lb\in\lca$ is therefore {\it integrable} if and only
if $\lb\in V({\rm int})$, and it is in addition {\it abelian} or {\it bi-invariant}
precisely when it belongs to $V({\rm ab})$ or $V(\mathbb{C})$, respectively.  It was
proved in \cite[Proposition 2.5]{DV} that the almost complex manifold determined by
$\lb\in\lca$ is Chern-flat if and only if $\lb\in V({\rm Ch})$, and in that case, it
is quasi-K\"ahler with respect to some (or equivalently to any) almost Hermitian
metric if and only if $\lb\in V(\overline{\mathbb{C}})$ (see \cite[Proposition
4.2]{DV}).  Notice that $V(\overline{\mathbb{C}}) \subset V({\rm Ch})$ and that the
set $V(\overline{\mathbb{C}})\cap\lca$ parameterizes the space of all
$2n$-dimensional anti-bi-invariant almost complex Lie algebras.

\begin{theorem}
The following decompositions into $\Gl_n(\mathbb{C})$-invariant subspaces hold:
\begin{itemize}
\item[(i)]  $V({\rm int}) = V({\rm ab}) \oplus V(\mathbb{C})$,
\vspace{0.1cm}
\item[(ii)]  $V(\rm{Ch}) =  V(\mathbb{C}) \oplus V(\overline{\mathbb{C}})$,
\vspace{0.1cm}
\item[(iii)]  $V = V(\rm{ab}) \oplus V(\mathbb{C}) \oplus V(\overline{\mathbb{C}})$.
\end{itemize}
\end{theorem}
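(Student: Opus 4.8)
The plan is to obtain all three splittings as eigenspace decompositions of two involutions of $V$ built out of $J$; since these operators commute with $J$ they automatically commute with the $\Gl_n(\CC)$-action \eqref{action}, so the resulting eigenspaces are $\Gl_n(\CC)$-invariant. Throughout I write a generic element of $V$ as $\mu$ and use $\mu(X,Y)$ in place of $[X,Y]$, and I use repeatedly that $J^{-1}=-J$. (The same bookkeeping can be done after complexifying $W=W^{1,0}\oplus W^{0,1}$ and sorting the components of $\mu$ according to the types of its two arguments and of its value into six blocks; each of the five subspaces is then a union of blocks and all three decompositions can be read off. I will instead give the coordinate-free version.)

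First I would treat the involution $B\mu(X,Y):=\mu(JX,JY)$. It preserves skew-symmetry, and $B^2=\mathrm{Id}$ because $\mu(J^2X,J^2Y)=\mu(X,Y)$. By definition its $(+1)$-eigenspace is exactly $V({\rm ab})$. For the $(-1)$-eigenspace, note that if $\mu(JX,JY)=-\mu(X,Y)$ then replacing $Y$ by $JY$ gives $-\mu(JX,Y)=-\mu(X,JY)$, i.e. $\mu(JX,Y)=\mu(X,JY)$, so $\mu\in V({\rm Ch})$; conversely $\mu\in V({\rm Ch})$ gives $\mu(JX,JY)=\mu(X,J^2Y)=-\mu(X,Y)$. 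Hence the $(-1)$-eigenspace is exactly $V({\rm Ch})$, and $V=V({\rm ab})\oplus V({\rm Ch})$.

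Next, on the invariant subspace $V({\rm Ch})$ I would use the one-slot twist $\widetilde{T}\mu(X,Y):=-J\mu(JX,Y)$. Here lies the one genuine subtlety, and the reason the abelian summand is present at all: twisting a single argument by $J$ does \emph{not} preserve skew-symmetry on all of $V$, so $\widetilde{T}$ is not defined on $V$. On $V({\rm Ch})$, however, the relation $\mu(JX,Y)=\mu(X,JY)$ makes $\widetilde{T}\mu$ skew, maps $V({\rm Ch})$ into itself, and yields $\widetilde{T}^2=\mathrm{Id}$ (a short computation using $J^2=-\mathrm{Id}$). The eigenvalue equation $\widetilde{T}\mu=\mu$ reads $\mu(JX,Y)=J\mu(X,Y)$, i.e. $\mu\in V(\CC)$, while $\widetilde{T}\mu=-\mu$ reads $\mu(JX,Y)=-J\mu(X,Y)$, i.e. $\mu\in V(\overline{\CC})$ (both being contained in $V({\rm Ch})$). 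This gives (ii), $V({\rm Ch})=V(\CC)\oplus V(\overline{\CC})$, and combining with the previous paragraph gives (iii).

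Finally, for (i) I would feed the three model subspaces into the Nijenhuis defect
\[
N(\mu)(X,Y):=\mu(JX,JY)-\mu(X,Y)-J\mu(JX,Y)-J\mu(X,JY),
\]
whose kernel is $V({\rm int})$. A direct substitution shows $N$ vanishes on $V({\rm ab})$ (using that $\mu(JX,Y)+\mu(X,JY)=0$ there, which again follows by replacing $Y$ by $JY$) and on $V(\CC)$, so $V({\rm ab})\oplus V(\CC)\subseteq V({\rm int})$, the sum being direct because $V({\rm ab})\cap V({\rm Ch})=0$. The same substitution gives $N(\mu)=-4\mu$ for $\mu\in V(\overline{\CC})$, whence $V(\overline{\CC})\cap V({\rm int})=0$. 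Writing an arbitrary $\mu\in V({\rm int})$ via (iii) as $\mu=\mu_{\rm ab}+\mu_{\CC}+\mu_{\overline{\CC}}$ and subtracting the first two summands, which lie in $V({\rm int})$, forces $\mu_{\overline{\CC}}\in V({\rm int})\cap V(\overline{\CC})=0$; hence $\mu\in V({\rm ab})\oplus V(\CC)$, proving (i). The main obstacle in the whole argument is the skew-symmetry issue flagged above: the one-slot twist only makes sense after restricting to $V({\rm Ch})$, and this restriction is precisely what isolates the bi-invariant and anti-bi-invariant pieces from the abelian piece.
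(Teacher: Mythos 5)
Your proof is correct and is essentially the paper's own argument: your involutions $B\mu=\mu(J\cdot,J\cdot)$ and $\widetilde{T}\mu=-J\mu(J\cdot,\cdot)$ are precisely the paper's explicit splittings $\mu=\tfrac{\mu+\mu(J\cdot,J\cdot)}{2}+\tfrac{\mu-\mu(J\cdot,J\cdot)}{2}$ and $\mu=\tfrac{\mu+\widetilde{T}\mu}{2}+\tfrac{\mu-\widetilde{T}\mu}{2}$, just phrased as eigenspace decompositions (with the same restriction to $V({\rm Ch})$ making the one-slot twist well defined). The only organizational difference is part (i), which the paper obtains by splitting an integrable bracket directly via the integrability identity, whereas you deduce it from (iii) together with the eigenvalue computation of the Nijenhuis defect; this is a mild rearrangement of the same identities, not a different method.
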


\begin{proof} Let $\lb \in V({\rm int})$ be an integrable bracket. Then
\[ [X,Y] = \frac{[X,Y] + [JX,JY]}{2} + \frac{[X,Y] - [JX,JY]}{2} \, \]
where  $\frac{[X,Y] + [JX,JY]}{2} \in V(\rm{ab})$. Since $\lb \in V(\rm{int})$ the
bracket $\frac{[X,Y] - [JX,JY]}{2} \in V(\mathbb{C})$. Indeed, $[JX,Y] + [X,JY] J([X,Y] - [JX,JY]) $ is the integrability condition for $\lb$, and so part (i)
follows.

To prove (ii), we note that for any $\lb \in V({\rm Ch})$, \[ [X,Y] = \frac{[X,Y] -
J[JX,Y]}{2} + \frac{[X,Y] + J[JX,Y]}{2} \, , \] where $\frac{[X,Y] - J[JX,Y]}{2} \in
V(\mathbb{C})$ and $\frac{[X,Y] + J[JX,Y]}{2} \in V(\overline{\mathbb{C}})$ since
$\lb \in V({\rm Ch})$.

Finally, part (iii) follows from  \[ [X,Y] = \frac{[X,Y] + [JX,JY]}{2} + \frac{[X,Y]
- [JX,JY]}{2} \, \] where $\frac{[X,Y] + [JX,JY]}{2} \in V(\rm{int})$ and
$\frac{[X,Y] - [JX,JY]}{2} \in V(\rm{Ch})$, for any $\lb\in V$.
\end{proof}

\subsection{Correspondence with complex $2$-step algebras}\label{varphi}
Let $(W,J)$ be an $n$-dimensional complex vector space as above and assume that
$W=W_1\oplus W_2$ is a $J$-invariant direct sum decomposition, with $\dim{W_1}=2p$,
$\dim{W_2}=2q$ (in particular, $p+q=n$).  We consider the subspace $V_{pq}\subset V$
of those elements $\lb$ which satisfies $[W_1,W_1]\subset W_2$ and $[W,W_2]=0$.
Notice that $V_{pq}\subset\lca$ as any $\lb\in V_{pq}$ is a $2$-step algebra and so
the Jacobi identity is automatically satisfied. We can analogously define the
subspaces $V_{pq}({\rm int})$, $V_{pq}({\rm ab})$, $V_{pq}(\CC)$, $V_{pq}({\rm Ch})$
and $V_{pq}(\overline{\CC})$, with the advantage in this case that each point in any
of these subspaces actually corresponds to an almost complex structure on a
nilmanifold of the type suggested by its name.

Let us fix decompositions $W_1=W_1^{\RR}\oplus W_1^{i\RR}$, $W_2=W_2^{\RR}\oplus
W_2^{i\RR}$ with respect to which $J$ has the form
$$
J|_{W_1}=\left[
    \begin{smallmatrix}
      0 & -I \\
      I & 0 \\
    \end{smallmatrix}
  \right],  \qquad
J|_{W_2}=\left[
    \begin{smallmatrix}
      0 & -I \\
      I & 0 \\
    \end{smallmatrix}
  \right],
$$
and define $\vp\in\Gl_{2n}(\RR)$ as the operator whose matrix with respect to the
decomposition $W=W_1^{\RR}\oplus W_1^{i\RR}\oplus W_2^{\RR}\oplus W_2^{i\RR}$ is
given by
$$
\vp=\left[
    \begin{smallmatrix}
      I &  &  &  \\
       & -I &  &  \\
       &  & I &  \\
       &  &  & I \\
    \end{smallmatrix}
  \right].
$$
For any $\lb\in V_{pq}$ we define its {\it conjugate} $\lb^{-}\in V_{pq}$ by
$$
\lb^-:= [\vp\cdot,\vp\cdot] = \vp.\lb.
$$
In this way,
$$
\lb^-|_{W_1^{\RR}\times W_1^{\RR}}=\lb, \qquad \lb^-|_{W_1^{\RR}\times
W_1^{i\RR}}=-\lb, \qquad \lb^-|_{W_1^{i\RR}\times W_1^{i\RR}}=\lb.
$$
By using that $\vp|_{W_1}$ anti-commutes with $J|_{W_1}$, it is easy to see that
$$
\lb^-\in V_{pq}(\overline{\CC}) \quad\mbox{if and only if}\quad \lb\in V_{pq}(\CC).
$$
Thus the map $\lb\mapsto\lb^-$ determines an $\RR$-linear isomorphism between
$V_{pq}(\CC)$ and $V_{pq}(\overline{\CC})$ whose inverse is given by itself.
Moreover, since $\vp|_{W_2}=I$ we have that this isomorphism satisfies
$$
(g.\lb)^-=(\vp g\vp).\lb^-, \qquad\forall g\in\Gl_p(\CC)\times\Gl_q(\CC),
$$
from which follows that it takes $\Gl_p(\CC)\times\Gl_q(\CC)$-orbits in
$V_{pq}(\CC)$ into $\Gl_p(\CC)\times\Gl_q(\CC)$-orbits in $V_{pq}(\overline{\CC})$
and thus it also defines a homeomorphism
\begin{equation}\label{homeo}
V_{pq}(\CC)/\Gl_p(\CC)\times\Gl_q(\CC) \longrightarrow
V_{pq}(\overline{\CC})/\Gl_p(\CC)\times\Gl_q(\CC).
\end{equation}

The space of $2n$-dimensional $2$-step algebras is parameterized by the
$\Gl_{2n}(\RR)$-invariant algebraic subset
$$
\nca_2=\{\lb\in V:[W,[W,W]]=0\}\subset\lca.
$$
If for each $0\leq p,q\leq n$ such that $p+q=n$ we fix a $J$-invariant decomposition
and define $V_{pq}$ as above, then it is easy to see that
$$
\nca_2(\CC)=\bigcup_{p+q=n}\Gl_n(\CC).V_{pq}(\CC), \qquad
\nca_2(\overline{\CC})=\bigcup_{p+q=n}\Gl_n(\CC).V_{pq}(\overline{\CC}).
$$
We note that $\nca_2(\CC)$ is precisely the space of all $n$-dimensional complex
$2$-step algebras, if we use $J$ to define multiplication by $\im$ in $W$, and
$\nca_2(\overline{\CC})$ is the space of all anti-bi-invariant almost complex
$2$-step Lie algebras.

\begin{remark}
The set $\bigcup\limits_{p+q=n}V_{pq}$ does not parameterize the space of all almost
complex $2$-step algebras, as we have fixed on each $V_{pq}$ a structure $J$ which
satisfies certain extra properties relative to the Lie algebra structure.
\end{remark}

Concerning the quotients, we have the following disjoint unions:
$$
\nca_2(\CC)/\Gl_n(\CC)=\bigcup\limits_{p+q=n}V_{pq}^0(\CC)/\Gl_p(\CC)\times\Gl_q(\CC),
$$

$$
\nca_2(\overline{\CC})/\Gl_n(\CC)=\bigcup_{p+q=n}V_{pq}^0(\overline{\CC})/\Gl_p(\CC)
\times\Gl_q(\CC),
$$
where $V_{pq}^0$ is the open and dense subset of $V_{pq}$ defined by
$$
V_{pq}^0=\{\lb\in V_{pq}:[W_1,W_1]=W_2\}.
$$
The study of the set $\nca_2(\overline{\CC})/\Gl_n(\CC)$ parameterizing
anti-bi-invariant almost complex $2$-step Lie algebras up to isomorphism can
therefore be carried out by considering each
$$
V_{pq}^0(\overline{\CC})/\Gl_p(\CC)\times\Gl_q(\CC)
$$
separately, which by (\ref{homeo}) is in its turn homeomorphic to the space
$$
V_{pq}^0(\CC)/\Gl_p(\CC)\times\Gl_q(\CC)
$$
of all complex $2$-step algebras of up to isomorphism {\it type} $(p,q)$ (i.e.
$(p+q)$-dimensional and with the derived algebra of dimension $q$).

Thus the classification of anti-bi-invariant almost complex $2$-step Lie algebras,
which is parameterized by $\nca_2(\overline{\CC})/\Gl_n(\CC)$, is completely
equivalent to the classification of $n$-dimensional complex $2$-step algebras up to
isomorphism, that is, $\nca_2(\CC)/\Gl_n(\CC)$.  This is considered a wild problem
in many places in the literature. A complete classification is however known in the
following cases:

\begin{itemize}
\item Types $(p,1)$ for any $p\geq 2$: $\hg_{2r+1}\oplus\CC^{p-2r}$, $0\leq 2r\leq p$, where
$\hg_{2r+1}$ denotes the complex Heisenberg Lie algebra of dimension $2r+1$ (it
follows directly from the classification of $\CC$-valued forms). \vspace{0.1cm}
\item $n=p+q\leq 9$ (see \cite{GltTms}).
\vspace{0.1cm}
\item Types $(p,2)$ for any $p\geq 3$ (see \cite{Ggr}).
\vspace{0.1cm}
\item Type $(p,q)=(5,5)$ (see \cite{GltTms}).
\end{itemize}

For any $n\leq 8$ there are only finitely many $n$-dimensional complex $2$-step Lie
algebras, and $(6,3)$ is the only type in dimension $9$ which contains continuous
families ($2$-parameter).  There are also continuous families of type $(5,5)$ and
$(8,2)$.

\subsection{Complexifications and real forms}\label{comp}
In \cite[Section 4.1]{DV}, in order to get explicit examples of anti-bi-invariant
almost complex $2$-step Lie algebras, the following construction was given.
Starting from any real $2$-step Lie algebra $(\hg,\lb_{\hg})$, consider on
$\hg_{\CC}:=\hg\otimes\CC$ the almost complex structure defined by multiplication by
$\im$ and the Lie bracket given by
$$
[X\otimes a,Y\otimes b]:=\overline{ab}[X,Y]_{\hg}.
$$
It is easy to see that $\lb$ satisfies the Jacobi condition if and only if $\hg$ is
$2$-step nilpotent.  It is proved in \cite[Proposition 4.4]{DV} that this
construction always provides an anti-complex $2$-step algebra, which will be called
from now on the {\it anti-complexification} of $\hg$.  In that case, $\hg$ is said
to be a {\it real form} of $(\hg\otimes\CC,\lb,\im)$.

We can identify any real $2$-step algebra of type $(p,q)$ with a skew-symmetric
bilinear form $\lb^0:W_1^{\RR}\times W_1^{\RR}\longrightarrow W_2^{\RR}$ (we are
using the notation introduced in the above subsections).  The complexification and
the anti-complexification of $\lb^0$ are therefore given by
$$
\begin{array}{lccl}
\lb^0_{\CC}:W_1\times W_1\longrightarrow W_2, &\qquad&& \lb^0_{\overline{\CC}}:W_1\times W_1\longrightarrow W_2, \\ \\
\left\{\begin{array}{l}
\lb^0_{\CC}|_{W_1^{\RR}\times W_1^{\RR}}=\lb^0, \\ \\
\lb^0_{\CC}|_{W_1^{\RR}\times W_1^{i\RR}}=-J[\cdot,J\cdot]^0, \\ \\
\lb^0_{\CC}|_{W_1^{i\RR}\times W_1^{i\RR}}=[J\cdot,J\cdot]^0,
\end{array}\right.
&&& \left\{\begin{array}{l}
\lb^0_{\overline{\CC}}|_{W_1^{\RR}\times W_1^{\RR}}=\lb^0, \\ \\
\lb^0_{\overline{\CC}}|_{W_1^{\RR}\times W_1^{i\RR}}=J[\cdot,J\cdot]^0, \\ \\
\lb^0_{\overline{\CC}}|_{W_1^{i\RR}\times W_1^{i\RR}}=[J\cdot,J\cdot]^0,
\end{array}\right.
\end{array}
$$
It follows that their conjugates satisfy
\begin{equation}\label{ac}
(\lb^0_{\CC})^-=\lb^0_{\overline{\CC}},\qquad
(\lb^0_{\overline{\CC}})^-=\lb^0_{\CC},
\end{equation}
and thus the following properties can all be deduced from (\ref{ac}) and the
corresponding well known facts for complex $2$-step algebras:

\begin{itemize}
\item $\lb\in V_{pq}(\overline{\CC})$ is the anti-complexification of a real $2$-step
nilpotent algebra if and only if $\lb^-\in V_{pq}(\CC)$ is the complexification of
the same algebra.
\item[ ]
\item For any (complex) dimension $n=p+q\leq 8$, every $\lb\in V_{pq}(\overline{\CC})$
is the anti-complexification of a real $2$-step nilpotent algebra (see
\cite{Sly,Ggr}).
\item[ ]
\item The type $(p,q)=(6,3)$ is the only one with $n=p+q=9$ for which there exists
$\lb\in V_{pq}(\overline{\CC})$ such that $\lb$ is not the anti-complexification of
any real $2$-step nilpotent algebra (see \cite{GltTms}).  There are actually curves
of examples of this kind for $(p,q)=(6,3)$  (see Example \ref{ex63}).
\item[ ]
\item Given $\lb\in V_{pq}(\overline{\CC})$, there are at most a finite number up
to isomorphism of real $2$-step algebras whose anti-complexification is $\lb$, as it
is well known that any complex Lie algebra admits at most finitely many real forms
up to isomorphism (see Remark \ref{MJ}).
\end{itemize}

\begin{remark}\label{MJ}
Any complex $n$-dimensional Lie algebra can be identified with a point in $\Lambda^2(\CC^n)^*\otimes\CC^n$, and thus the set of its real forms up to isomorphism is parameterized by the intersection of its $\Gl_n(\CC)$-orbit with $\Lambda^2(\RR^n)^*\otimes\RR^n$ modulo the action by $\Gl_n(\RR)$.  It follows from \cite[Proposition 2.3]{BrlHrs} that such intersection contains at most finitely many $\Gl_n(\RR)$-orbits, as we are dealing with a rational representation of a reductive algebraic group.
\end{remark}

Let $K$ be a field of characteristic zero, and let $P_{q,p/2}$ denote the vector
space of all homogeneous polynomials of degree $p/2$ in $q$ variables with
coefficients in $K$. One can associate to any $2$-step algebra over $K$ of type
$(p,q)$, $p$ even, an element $f\in P_{q,p/2}$ called the {\it Pfaffian form} (see
\cite[Section 2]{ratform}). This provides an invariant in the following sense: if
two algebras are isomorphic then their respective Pfaffian forms $f$ and $g$ are
{\it projectively equivalent}, i.e. there exist $A\in\Gl_q(K)$ and $c\in K^*$ such
that
$$
f(x_1,...,x_q)=cg(A(x_1,...,x_q)).
$$
We can use this invariant to exhibit explicit examples of complex $2$-step algebras
without a real form as follows.  Assume that a complex $2$-step algebra $\ngo$ of
type $(p,q)$ has a Pfaffian form $f$ such that $S(f)/T(f)\notin\RR$ for two
$\Sl_q(\CC)$-invariant homogeneous polynomials
$S,T:P_{q,p/2}(\CC)\longrightarrow\CC$ of the same degree.  Then $\ngo$ does not
admit any real form.  Indeed, if $\ngo$ was the complexification of a real $2$-step
algebra, then $\ngo$ would be isomorphic as a complex Lie algebra to a Lie algebra
with real structural constants with respect to some basis and so its Pfaffain form
$g$ would have real coefficients (see \cite[Section 2]{ratform}).  The polynomials
$f$ and $g$ would therefore be projectively equivalent (recall that $A$ can be
assumed to belong to $\Sl_q(\CC)$ when $K=\CC$) and thus
$S(f)/T(f)=S(g)/T(g)\in\RR$, which is a contradiction.

We now use this criterium in explicit examples.

\begin{example}
Let us consider the complex $2$-step Lie algebra $\lb_t$, $t\in\CC$, of type $(8,2)$
defined in terms of the bases $\{ X_1,...,X_8\}$ of $W_1$ and $\{ Z_1,Z_2\}$ of
$W_2$ by
$$
\begin{array}{lll}
[X_1,X_5]_t=Z_1, & [X_2,X_6]_t=Z_1, & [X_3,X_7]_t=Z_1, \\

[X_4,X_8]_t=Z_1, & [X_2,X_5]_t=Z_2, & [X_3,X_6]_t=Z_2, \\

[X_4,X_7]_t=Z_2, & [X_1,X_8]=-Z_2 & [X_2,X_7]_t=-tZ_2.
\end{array}
$$

It is easy to check that its Pfaffian form is given by
$$
f_t(x,y)=x^4+tx^2y^2+y^4.
$$
The invariants $S$ and $T$ given in \cite[pp.150]{Dlg} of degree $2$ and $3$,
respectively, satisfy $S(f_t)=1+3t^2$ and $T(f_t)=t-t^3$, and thus for any $t\in\CC$
such that
$$
S^3(f_t)/T^2(f_t)=\frac{(1+3t^2)^3}{(t-t^3)^2}\notin\RR,
$$

the algebra $\lb_t$ does not admit any real form.
\end{example}

\begin{example}\label{ex63}
Let us consider the complex $2$-step Lie algebra $\lb_t$, $t\in\CC$, of type $(6,3)$
defined in terms of the bases $\{ X_1,...,X_6\}$ of $W_1$ and $\{ Z_1,...,Z_3\}$ of
$W_2$ by
$$
\begin{array}{lll}

[X_1,X_2]_t=tZ_1, & [X_3,X_4]_t=tZ_2, & [X_5,X_6]_t=tZ_3, \\

[X_5,X_4]_t=Z_1, & [X_1,X_6]_t=Z_2, & [X_3,X_2]_t=Z_3, \\

[X_3,X_6]_t=Z_1, & [X_5,X_2]_t=Z_2, & [X_1,X_4]_t=Z_3.
\end{array}
$$

By a straightforward computation one obtains that its Pfaffian form is
$$
f_t(x,y,z)=(t^3+2)xyz-t(x^3+y^3+z^3).
$$
The invariants $S$ and $T$ given in \cite[pp.160]{Dlg} of degree $4$ and $6$,
respectively, satisfy
$$
S(f_t)=-t^3\left(\tfrac{t^3+2}{6}\right)-\left(\tfrac{t^3+2}{6}\right)^4, \qquad
T(f_t)=t^6+20t^3\left(\tfrac{t^3+2}{6}\right)^3-8\left(\tfrac{t^3+2}{6}\right)^6,
$$
and thus for any $t\in\CC$ such that
$$
S^3(f_t)/T^2(f_t)=\tfrac{\left(-t^3\left(\tfrac{t^3+2}{6}\right)
-\left(\tfrac{t^3+2}{6}\right)^4\right)^3}{\left(t^6+20t^3
\left(\tfrac{t^3+2}{6}\right)^3-8\left(\tfrac{t^3+2}{6}\right)^6
\right)^2}\notin\RR,
$$
the algebra $\lb_t$ does not admit any real form.
\end{example}

In the above two examples, if $\lb_t\simeq\lb_s$ then
$S^3(f_t)/T^2(f_t)=S^3(f_s)/T^2(f_s)$, which can be used to get explicit examples of
curves of pairwise non-isomorphic complex $2$-step Lie algebras without a real form.
By the results described above, the conjugate of these curves provide examples of
anti-bi-invariant almost complex structures on $2$-step Lie algebras which are not
the anti-complexification of any real $2$-step Lie algebra.

\subsection{Canonical metrics for anti-bi-invariant structures}\label{can}
Let $N$ be a connected and simply connected nilpotent Lie group with Lie algebra
$\ngo$.  A left invariant Riemannian metric on $N$ is always identified with the
corresponding inner product $\ip$ on $\ngo$.  The Ricci operator of $(N,\ip)$ is
given by
\begin{equation}\label{ricci}
\begin{array}{rcl}
\la \Ricci_{\ip}X,Y\ra &=& -\unm\sum\la [X,X_i],X_j\ra
\la [Y,X_i],X_j\ra \\ \\
&& +\unc\sum\la [X_i,X_j],X\ra\la [X_i,X_j],Y\ra, \qquad\forall X,Y\in\ngo,
\end{array}
\end{equation}
where $\{ X_i\}$ is any orthonormal basis of $(\ngo,\ip)$.  It is easy to see that
$N$ can never admit an Einstein left invariant metric (i.e. $\Ricci_{\ip}=cI$),
unless it is abelian.  The following class of metrics play the role of most
distinguished or canonical metrics on nilmanifolds.

A left invariant metric on $N$ is said to be a {\it nilsoliton} if
$\Ricci_{\ip}=cI+D$ for some $c\in\RR$ and $D\in\Der(\ngo)$, where $\Der(\ngo)$ is
the vector space of all derivations of $\ngo$.   Nilsolitons are known to satisfy
the following properties:

\begin{itemize}
\item They are {\it Ricci solitons}, i.e. the solutions of the
Ricci flow $\dpar g(t)=-2\ricci_{g(t)}$ starting at them evolve only by scaling and
the action by diffeomorphisms (see \cite{libro}).

\item A given $N$ can admit at most one nilsoliton up to isometry and scaling among all its left-invariant metrics.

\item They are also characterized by minimizing the norm of the Ricci tensor among all
left invariant metrics on the given Lie group with the same scalar curvature.
Furthermore, they are the critical points of the functional square norm of the Ricci
tensor on the space of all nilmanifolds of a given dimension and scalar curvature.

\item Nilsolitons are precisely the nilpotent parts of Einstein solvmanifolds.
\end{itemize}

Nevertheless, the existence, structural and classification problems on nilsolitons
seem to be far from being satisfactory solved, if at all possible (see the recent
survey \cite{cruzchica} for further information on nilsolitons).

In the presence of an invariant almost complex structure on $N$, one may consider
the following definition for an almost Hermitian metric.  Given an almost complex
structure $J$ on $\ngo$, an almost Hermitian metric $\ip$ on $(\ngo,J)$ is called
{\it minimal} if $\Ricci_{\ip}^J=cI+D$ for some $c\in\RR$ and $D\in\Der(\ngo)$,
where
$$
\Ricci_{\ip}^J:=\unm\left(\Ricci_{\ip}-J\Ricci_{\ip}J\right)
$$
is the $(1,1)$-component of the Ricci operator $\Ricci_{\ip}$ of the almost
Hermitian manifold $(\ngo,J,\ip)$. The name is in some sense justified by the
following property: they minimize $||\Ricci_{\ip}^J||$ among all almost Hermitian
left invariant metrics on $(\ngo,J)$ with the same scalar curvature.  It is also
known that a minimal metric on $(\ngo,J)$ (if any) is unique up to isometry and
scaling, they develop soliton solutions for the `complex' Ricci flow and are
characterized as the critical points of a natural variational problem (see
\cite{minimal, praga} for further information on minimal compatible metrics for
geometric structures on nilmanifolds).

Let us now assume that $J$ is bi-invariant, and let $\ngo_{\CC}$ denote the complex
vector space $(\ngo,J)$.  Thus the Lie bracket $\lb$ of $\ngo$ defines a complex
nilpotent Lie algebra on $\ngo_{\CC}$.  It is easy to check by using formula
(\ref{ricci}) and the bi-invariance of $J$ that $[\Ricci_{\ip},J]=0$, and so
$\Ricci_{\ip}^J=\Ricci_{\ip}$; in particular, $\Ricci_{\ip}$ is a $\CC$-linear
operator on $\ngo_{\CC}$ and so is the derivation $D$ in case $\ip$ is minimal.

If we assume in addition that $\ngo_{\CC}$ is the complexification of a real
nilpotent Lie algebra $\ngo_{\RR}$, and if $\ip$ is given by
$$
\la X\otimes a,Y\otimes b\ra =\Re(a\overline{b})\la X,Y\ra_{\RR}, \qquad\forall
X,Y\in\ngo_{\RR},
$$
for some inner product $\ip_{\RR}$ on $\ngo_{\RR}$, then it is easy to see that with
respect to the orthogonal decomposition $\ngo=\ngo_{\RR}\oplus \im\ngo_{\RR}$ one
has:
$$
\Ricci_{\ip}=\left[
    \begin{smallmatrix}
      2\Ricci_{\ip_{\RR}} & 0 \\
      0 & 2\Ricci_{\ip_{\RR}} \\
    \end{smallmatrix}
  \right],
$$
where $\Ricci_{\ip_{\RR}}$ is the Ricci operator of $(\ngo_{\RR},\ip_{\RR})$.  By
using that $\ngo_{\RR}$ is a Lie subalgebra of $\ngo$ and the fact that
$\Ricci_{\ip}$ is $\CC$-linear, it is therefore easy to see that $(\ngo,\ip)$ is a
nilsoliton if and only if $(\ngo_{\RR},\ip_{\RR})$ is so.

On the other hand, when $\ngo$ is $2$-step nilpotent and $(\ngo,J)$ is bi-invariant,
we can consider the conjugate $\overline{\ngo}$ of $\ngo$ as defined in Subsection
\ref{varphi}.  Thus $(\overline{\ngo},J,\ip)$ is a quasi-K\"ahler Chern-flat
structure and analogously to the bi-invariant case, one can easily prove that the
$(1,1)$-component of its Ricci operator equals $\Ricci_{\ip}$.  This implies that
$(\overline{\ngo},\ip)$ is a nilsoliton if and only if $(\ngo,\ip)$ is so, and this
is also equivalent to the minimality of both the bi-invariant Hermitian structure
$(\ngo,J,\ip)$ and the almost Hermitian structure $(\overline{\ngo},J,\ip)$.

\begin{remark}
$(\overline{\ngo},J,\ip)$ is isometric to $(\ngo,J^{-},\ip)$, where $J^{-}:=\varphi
J\varphi^{-1}$ is obtained from $J$ by changing the sign on $W_1$.
\end{remark}

We summarize in the following proposition the results we have just obtained.

\begin{proposition}\label{equivs}
Let $(\ngo,J)$ be a real $2n$-dimensional nilpotent Lie algebra endowed with a
bi-invariant complex structure $J$.  For any Hermitian metric $\ip$, the following
conditions are equivalent:
\begin{itemize}
\item[(i)] $(\ngo,J,\ip)$ is minimal.

\item[(ii)] $(\ngo,\ip)$ is a nilsoliton.
\end{itemize}
If in addition the complex Lie algebra $\ngo_{\CC}=(\ngo,J)$ is the complexification
of an $n$-dimensional real Lie algebra $\ngo_{\RR}$, then a third equivalence holds:
\begin{itemize}
\item[(iii)] $(\ngo_{\RR},\ip_{\RR})$ is a nilsoliton, where $\ip_{\RR}:=\ip|_{\ngo_{\RR}\times\ngo_{\RR}}$.
\end{itemize}
Furthermore, assume that $\ngo$ is $2$-step nilpotent and consider $\overline{\ngo}$
the Lie algebra conjugate to $\ngo$.  Then parts {\rm (i)} and {\rm (ii)} (and also
{\rm (iii)} assuming that $\ngo_{\CC}=\ngo_{\RR}\otimes\CC$) are equivalent to the
following conditions:
\begin{itemize}
\item[(iv)] $(\overline{\ngo},J,\ip)$ is minimal.

\item[(v)] $(\overline{\ngo},\ip)$ is a nilsoliton.
\end{itemize}
\end{proposition}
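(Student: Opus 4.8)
The statement is a summary of the three computations carried out in the paragraphs immediately preceding it, so the plan is simply to assemble them in the right order. First I would prove $\mathrm{(i)}\Longleftrightarrow\mathrm{(ii)}$. Feeding the bi-invariance relation $[JX,Y]=J[X,Y]$ into the Ricci formula (\ref{ricci}) shows that $\Ricci_{\ip}$ commutes with $J$, whence
$$
\Ricci_{\ip}^J=\unm\left(\Ricci_{\ip}-J\Ricci_{\ip}J\right)=\Ricci_{\ip}.
$$
Thus the minimality equation $\Ricci_{\ip}^J=cI+D$ and the nilsoliton equation $\Ricci_{\ip}=cI+D$ coincide verbatim, giving $\mathrm{(i)}\Leftrightarrow\mathrm{(ii)}$. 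I would also record for later use that $\Ricci_{\ip}$ is then $\CC$-linear on $\ngo_{\CC}$, and hence so is the derivation $D$ when $\ip$ is minimal.

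Next, under the hypothesis that $\ngo_{\CC}$ is the complexification of $\ngo_{\RR}$ with $\ip$ as displayed, I would use the block form $\Ricci_{\ip}=\mathrm{diag}\!\left(2\Ricci_{\ip_{\RR}},\,2\Ricci_{\ip_{\RR}}\right)$ relative to the orthogonal splitting $\ngo=\ngo_{\RR}\oplus\im\ngo_{\RR}$. Since $\ngo_{\RR}$ is a subalgebra and $\Ricci_{\ip}$ is $\CC$-linear, the equation $\Ricci_{\ip}=cI+D$ restricts on $\ngo_{\RR}$ to $2\Ricci_{\ip_{\RR}}=cI+D|_{\ngo_{\RR}}$; one checks that $D|_{\ngo_{\RR}}$ is a derivation of $\ngo_{\RR}$, and conversely that a derivation solving the real equation extends $\CC$-linearly to one of $\ngo$. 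This yields $\mathrm{(ii)}\Leftrightarrow\mathrm{(iii)}$.

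Finally, for (iv) and (v) I would pass to the conjugate $\overline{\ngo}$ of Subsection \ref{varphi} in the $2$-step case. There $(\overline{\ngo},J,\ip)$ is quasi-K\"ahler Chern-flat, and an analogous computation with the conjugate bracket shows that the $(1,1)$-component of its Ricci operator again equals $\Ricci_{\ip}$. Arguing as in the bi-invariant case, this identifies the minimality of $(\overline{\ngo},J,\ip)$ and the nilsoliton condition for $(\overline{\ngo},\ip)$ with the single equation $\Ricci_{\ip}=cI+D$, hence with (i) and (ii) (and with (iii) whenever a real form is present), completing the list of equivalences. The only step requiring genuine care is the second one: verifying that the derivation $D$ solving the nilsoliton equation on $\ngo$ is compatible with the real form $\ngo_{\RR}$, and conversely; all remaining assertions are direct transcriptions of identities already established above.
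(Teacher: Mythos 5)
Your proposal is correct and takes essentially the same route as the paper: Proposition \ref{equivs} is presented there explicitly as a summary of the computations in Subsection \ref{can}, namely $[\Ricci_{\ip},J]=0$ hence $\Ricci_{\ip}^J=\Ricci_{\ip}$ for (i)$\Leftrightarrow$(ii), the block form $\Ricci_{\ip}=\left[\begin{smallmatrix} 2\Ricci_{\ip_{\RR}} & 0 \\ 0 & 2\Ricci_{\ip_{\RR}} \end{smallmatrix}\right]$ for (ii)$\Leftrightarrow$(iii), and the equality of the $(1,1)$-Ricci component for the conjugate $\overline{\ngo}$ for (iv)--(v), which is exactly what you assemble. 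The additional care you devote to restricting and $\CC$-linearly extending the derivation $D$ across the real form is detail the paper leaves implicit (``it is therefore easy to see''), but it is the same argument.
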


We therefore obtain from known results on nilsolitons (see \cite[Section
6]{cruzchica}) that the anti-complexification of the following real $2$-step
algebras give rise to quasi-K\"ahler Chern-flat manifolds admitting a minimal almost
Hermitian metric:

\begin{itemize}
\item Any $2$-step algebra of dimension $\leq 8$ (only finitely many), and some continuous families for the types $(6,3)$, $(5,5)$ and $(8,3)$.

\vspace{0.1cm}
\item Any nilpotent Lie algebra with a codimension-one abelian ideal.

\vspace{0.1cm}
\item $H$-type Lie algebras (including real, complex and quaternionic Heisenberg algebras).

\vspace{0.1cm}
\item Nilradicals ($2$-step) of normal $j$-algebras (i.e. of
noncompact homogeneous K$\ddot{{\rm a}}$hler Einstein spaces), and of homogeneous
quaternionic K$\ddot{{\rm a}}$hler spaces.

\vspace{0.1cm}
\item Any $2$-step algebra of type $(p,q)$ with $p\leq 5$ and $(p,q)\ne (5,5), (5,6), (5,7)$.

\vspace{0.1cm}
\item The nilradical ($2$-step) of any parabolic subalgebra of any semisimple
Lie algebra.

\vspace{0.1cm}
\item Certain $2$-step algebras attached to (positive) graphs.

\vspace{0.1cm}
\item Nilpotent Lie algebras admitting a naturally reductive left invariant metric (which are all $2$-step and  constructed from representations of compact Lie groups).
\end{itemize}

Due to the `closeness' of $2$-step nilmanifolds to euclidean spaces in many aspects,
it was believed for a long time that any $2$-step algebra would admit a nilsoliton
metric.  This has recently been dismissed by the following curve $\ngo_{t},$  $t\in
(1,\infty)$, of pairwise non-isomorphic $2$-step algebras of type $(6,3)$, none of
which admit a nilsoliton metric (see \cite{Wll2}):
$$
\begin{array}{lll}
[X_5,X_4]_t=X_7, & [X_1,X_6]_t=X_8, &  [X_3,X_2]_t=X_9, \\

[X_3,X_6]_t=tX_7, & [X_5,X_2]_t=tX_8, &  [X_1,X_4]_t=tX_9, \\

[X_1,X_2]_t=X_7. &&
\end{array}
$$

Recall that hence the anti-complexifixation of these $2$-step algebras do not admit
minimal almost Hermitian metrics. Other examples of this kind are given by the
$2$-step algebras attached to non-positive graphs (in any dimension $\geq 11$), and
the real forms of six complex $2$-step algebras of type $(6,5)$ and three of type
$(7,5)$ (see \cite[Section 6]{cruzchica}).

\end{document}